\def\BibTeX{{\rm B\kern-.05em{\sc i\kern-.025em b}\kern-.08em
    T\kern-.1667em\lower.7ex\hbox{E}\kern-.125emX}}
\numberwithin{equation}{section}
\newcommand{\R}{\mathbb{R}}
\newcommand{\N}{\mathbb{N}}
\newcommand{\K}{\mathcal{K}}
\newcommand{\B}{\mathbb{B}}
\newcommand{\E}{\mathbb{E}}
\newcommand{\e}{\varepsilon}
\newcommand{\esssup}{\mathrm{ess}\sup}
\newcommand{\bfrac}[2]{\genfrac{}{}{0pt}{}{#1}{#2}}
\newcommand{\dm}{\mathrm{d}}
\newcommand{\vertiii}[1]{{\left\vert\kern-0.25ex\left\vert\kern-0.25ex\left\vert #1 
    \right\vert\kern-0.25ex\right\vert\kern-0.25ex\right\vert}}
\newtheorem{Theorem}{Theorem}[section]
\newtheorem{Corollary}[Theorem]{Corollary}
\newtheorem{Lemma}[Theorem]{Lemma}
\newtheorem{Remark}[Theorem]{Remark}
\newtheorem{Definition}[Theorem]{Definition}
\begin{document}

\title[Nonlinear perturbations of evolution systems in scales of Banach spaces]{Nonlinear perturbations of evolution systems in scales of Banach spaces}

\author{Martin Friesen}
\address[Martin Friesen]{Faculty of Mathematics and Natural Sciences, University of Wuppertal, Germany}
\email{friesen@math.uni-wuppertal.de}

\author{Oleksandr Kutoviy}
\address[Oleksandr Kutoviy]{Department of Mathematics, Bielefeld University, Germany}
\email{kutoviy@math.uni-bielefeld.de}

\date{\today}

\subjclass[2010]{35Q84, 47H14, 47H20, 70K75}

\keywords{Cauchy-Kovalevskaya; scale of Banach spaces; nonlinear Fokker-Planck equation; Kimura-Maruyama}

\begin{abstract}
 A variant of the abstract Cauchy-Kovalevskaya theorem is considered. We prove existence and uniqueness of classical solutions to 
 the nonlinear, non-autonomous initial value problem
 \[
  \frac{\dm u(t)}{\dm t} = A(t)u(t) + B(u(t),t), \ \ u(0) = x
 \]
 in a scale of Banach spaces. Here $A(t)$ is the generator of an evolution system acting in a scale of Banach spaces and $B(u,t)$ obeys an Ovcyannikov-type bound.
 Continuous dependence of the solution with respect to $A(t)$, $B(u,t)$ and $x$ is proved.
 The results are applied to the Kimura-Maruyama equation for the mutation-selection balance model. This yields a new insight in the construction and
 uniqueness question for nonlinear Fokker-Planck equations related with interacting particle systems in the continuum.
\end{abstract}

\maketitle

\allowdisplaybreaks

\section{Introduction}

\subsection{Abstract Cauchy problem in a scale of Banach spaces}
Scales of Banach spaces are a commonly used tool for the study of different (linear and nonlinear) integro-differential equations.
They can be used to prove local existence, uniqueness and continuous dependence on the initial data when other methods do not apply. Applications to linear partial differential equations can be found in \cite{T08, T10, H13, BHP15},
whereas applications to semilinear partial differential equations are given in \cite{CAPS}.
Let us mention that such techniques have been successfully applied 
in \cite{U84} for the Boltzmann equation in space and time.
In the last years we also observe a growing interest to the study of Fokker-Planck equations related with infinite particle systems in the continuum, 
cf. \cite{FKO12, BKKK13, FK13, KK16}. 

The aforementioned applications are related to the time-inhomogeneous abstract Cauchy problem
\begin{align}\label{INTRO:00}
 \frac{\dm u(t)}{\dm t} = B(u(t),t),\ \  u(0) = x,
\end{align}
where $B$ is a (linear or nonlinear) operator in a scale of Banach spaces $\B = (\B_{\alpha})_{\alpha \in [\alpha_*, \alpha^*]}$,
$\Vert \cdot \Vert_{\alpha} \leq \Vert \cdot \Vert_{\alpha'}$ for $\alpha',\alpha \in [\alpha_*,\alpha^*]$ with $\alpha' < \alpha$. 
It is assumed that
$B: \B_{\alpha'} \times [0,T] \longrightarrow \B_{\alpha}$ is continuous and satisfies, for all $\alpha' < \alpha$ and $u,v \in \B_{\alpha'}$ with 
$\Vert u\Vert_{\alpha'}, \Vert v \Vert_{\alpha'} \leq r$,
\[
 \Vert B(u,t) - B(v,t) \Vert_{\alpha} \leq \frac{C}{\alpha - \alpha'} \Vert u - v \Vert_{\alpha'}.
\]
The general approach to equations of the form \eqref{INTRO:00} goes back to Ovcyannikov \cite{OVC74,OVC80, OVC13}, Nirenberg \cite{NIR72} and Nishida \cite{NIS77}. 
Later on, Zabre{\u\i}ko \cite{ZA89} discussed the more general evolution equation
\[
 \frac{\dm^m u}{\dm t^m}(t) = B(u(t),t), \ u(0) = u_0, \ \cdots, \ u^{(m-1)}(0) = u_{m-1},
\]
where $m \in \N$. A construction of solutions to \eqref{INTRO:00} in a suitable space of continuous $\B$-valued functions was considered by Safonov \cite{SAF95}. 
The study of fractional time derivatives with linear operator $B$ was recently initiated in \cite{KK16a}. 

In this work we investigate the linear perturbation
\begin{align}\label{NONLINEAR:08}
 \frac{\dm u(t)}{\dm t} = A(t)u(t) + B(u(t),t), \ \ u(0) = x \in \B_{\alpha_*}.
\end{align}
A linear version of \eqref{NONLINEAR:08} was considered in \cite{F15} and more recently also in \cite{F19}.
We show that, under some suitable conditions, \eqref{NONLINEAR:08} is equivalent to the integral equation
\begin{align}\label{NONLINEAR:00}
 u(t) = U(t,0)x + \int \limits_{0}^{t}U(t,s)B(u(s),s)\dm s.
\end{align}
Here, $U(t,s)$ is an evolution system on $\B$ having $A(t)$ as its generator in some extended sense.
Using a modified version of the Banach space introduced in \cite{SAF95}, we prove by a fix point argument the existence and uniqueness of classical solutions to \eqref{NONLINEAR:00} and consequently to \eqref{NONLINEAR:08}.
Afterwards, we study stability of the solution $u$ with respect to $x,A(t)$ and $B(u,t)$. 

\subsection{Kimura-Maruyama equation}
We apply our results to the Kimura-Maruyama equation for the (epistatic) mutation-selection balance model. 
Such model was introduced in \cite{KM66} to describe aging of a population. A mathematical analysis and additional information on the 
biological background are given in \cite{SEW05}. The dynamics is described by a family
of probability measures $(\mu_t)_{t \in [0,T)}$ on the space of locally finite configurations $\Gamma$ with underlying complete, metric space $X$. Such a family is assumed to solve the nonlinear Fokker-Planck equation
\begin{align}\label{INTRO:01}
 \frac{\dm}{\dm t}\langle F, \mu_t \rangle = \langle L(t,\mu_t)F, \mu_t \rangle, \ \ \mu_t|_{t=0} = \mu_0,
\end{align}
where $\langle F, \mu \rangle := \int_{\Gamma}F(\gamma)\dm \mu(\gamma)$ and the (nonlinear) Markov operator $L(t,\mu)$ is given by
\begin{align}\label{EQ:105}
 L(t,\mu)F(\gamma) = \int \limits_{X}(F(\gamma \cup \{x\}) - F(\gamma))a(t,x)\sigma(\dm x) - \Phi(t,\gamma)F(\gamma) + \langle \Phi(t,\cdot),\mu\rangle F(\gamma).
\end{align}
Appearance of new mutations is described by a free birth process on $\Gamma$ (the so-called Sourgailis process) represented by the first term in \eqref{EQ:105},
where $\sigma$ is a $\sigma$-finite Borel measure on $X$ and $a(t,x) \geq 0$ is supposed to be a continuous bounded function on $\R_+ \times X$. The killing term $-\Phi(t,\gamma)F(\gamma)$ in \eqref{EQ:105} describes the selection mechanism for mutations where
\[
 \Phi(t,\gamma) = \sum \limits_{x \in \gamma}h(t,x) + \frac{1}{2}\sum \limits_{x \in \gamma}\sum \limits_{y \in \gamma \backslash \{x\}}\psi(t,x,y), \ \ t \geq 0, \ \gamma \in \Gamma
\]
is the selection cost functional and $h,\psi$ are assumed to be integrable, continuous, bounded,
and $\psi(t,x,y) = \psi(t,y,x)$ for all $t \geq 0$, $x,y \in X$.
The last term in \eqref{EQ:105} guarantees that the evolution of states obtained from the Fokker-Planck equation \eqref{INTRO:01} preserves probability. 

For the general theory of nonlinear Markov evolutions we refer to \cite{KOL10}. A detailed analysis of this model can be found in \cite{SEW05, KKO08, KKMP13}.
In the time-homogeneous case, with $a = 1$, a solution was constructed by the Feynmann-Kac formula and its behaviour when
$t \to \infty$ was studied in \cite{SEW05}. 
Under additional assumptions on $\psi$ and $h$ it was shown in \cite{KKO08} that the invariant measure is a Gibbs measure with energy $\Phi(\gamma)$.
However, uniqueness to \eqref{INTRO:01} was only shown for the non-epistatic case $\psi = 0$. In this work we provide existence and uniqueness of classical solutions 
to the associated hierarchical equations of correlation functions which can be seen as an Markov analogue of the BBGKY-hierarchy from physics. As a consequence, we are able to prove the uniqueness for a certain class of solutions to \eqref{INTRO:01} in the epistatic case $\psi \neq 0$.

\subsection{Structure of the work}
This work is organized as follows. Section 2 is devoted to the construction and uniqueness of classical solutions to \eqref{NONLINEAR:08}.
Stability of the solutions is studied in Section 3. The obtained results are applied in Section 4 to the Kimura-Maruyama equation \eqref{INTRO:01}.

\section{Existence and uniqueness}

\subsection{Scales of Banach spaces}
Let $\B = (\B_{\alpha},\Vert \cdot \Vert_{\alpha})_{\alpha_* \leq \alpha \leq \alpha^*}$ be a scale of Banach spaces on the interval $[\alpha_*,\alpha^*] \subset \R$.
That is for any $\alpha', \alpha \in [\alpha_*,\alpha^*]$ with $\alpha' < \alpha$: $\Vert \cdot \Vert_{\alpha} \leq \Vert \cdot \Vert_{\alpha'}$
and $\B_{\alpha'} \subset \B_{\alpha}$ hold. Let $i_{\alpha' \alpha} \in L(\B_{\alpha'}, \B_{\alpha})$
be the corresponding embedding operator,
where $L(\B_{\alpha'}, \B_{\alpha})$ denotes the space of all bounded linear operators from $\B_{\alpha'}$ to $\B_{\alpha}$. We set
$B_r^{\alpha}(x) = \{ y \in \B_{\alpha}\ | \ \Vert y-x\Vert_{\alpha} \leq r\}$ with
$r \in (0,\infty)$ and $x \in \B_{\alpha}$. Similarly, let $B_{\infty}^{\alpha}(x) := \B_{\alpha}$, whenever $\alpha \in [\alpha_*,\alpha^*]$ and $x \in \B_{\alpha}$. 
\begin{Definition}
The nonlinear operator $L$ in a scale of Banach spaces $\B$ is, by definition, a collection of mappings 
$L = (L_{\alpha'\alpha})_{\alpha_* \leq \alpha' < \alpha \leq \alpha^*}$ such that there exists $r \in (0,\infty]$  and $x \in \B_{\alpha_*}$
for which the following two properties hold
\begin{enumerate}
 \item[(i)] For all $\alpha', \alpha \in [\alpha_*,\alpha^*]$, $\alpha' < \alpha$ the map $L_{\alpha' \alpha}$ maps $B_r^{\alpha'}(i_{\alpha_* \alpha'}(x))$
 to $\B_{\alpha}$.
 \item[(ii)] For all $\alpha, \alpha',\alpha'' \in [\alpha_*,\alpha^*]$ with $\alpha' < \alpha < \alpha''$ we have
 \begin{align}\label{NONLINEAR:05}
  L_{\alpha' \alpha''}y = i_{\alpha \alpha''}L_{\alpha' \alpha}y = L_{\alpha \alpha''}i_{\alpha' \alpha}y, \ \ \forall y \in B_r^{\alpha'}(i_{\alpha_* \alpha'}(x)).
 \end{align}
\end{enumerate}
\end{Definition}
We call the operator $L$ a bounded linear operator in the scale $\B$ if $L_{\alpha'\alpha}$ is defined on the whole $\B_{\alpha'}$ (that is $r = \infty$), $L_{\alpha' \alpha} \in L(\B_{\alpha'}, \B_{\alpha})$ and \eqref{NONLINEAR:05} holds for all $y \in \B_{\alpha'}$. 
For two linear operators in the scale $\B$, say $L$ and $K$, the composition $LK$ is defined by
\begin{align}\label{NONLINEAR:06}
 (LK)_{\alpha' \alpha} = L_{\beta \alpha}K_{\alpha' \beta},
\end{align}
where $\beta \in (\alpha', \alpha)$. Above definition yields again a bounded linear operator $LK$ in the scale $\B$.
It is worth noting that definition \eqref{NONLINEAR:06} does not depend on $\beta$, because of \eqref{NONLINEAR:05}. In the same way we may also define $LK$, if $L$ is a bounded linear operator and $K$ a nonlinear operator in the scale $\B$.

In the following we always work with linear and nonlinear operators in the scale $\B$ and omit the subscripts $\alpha'\alpha$ when no confusion can arise.
For simplicity of notation, we let $x=y$, $x \in \B_{\alpha}$, $y \in \B_{\alpha'}$ stand for $i_{\alpha' \alpha}y = x$ and,
since no confusion may arise, we let $B_r^{\alpha}(y)$ stand for $B_r^{\alpha}(i_{\alpha'\alpha}(y))$.

\subsection{Framework}
Fix $x \in \B_{\alpha_*}$ and $T > 0$. 
\begin{enumerate}
 \item[A1.] There exists a family of bounded linear operators $(U(t,s))_{0 \leq s \leq t < T}$ in the scale $\B$
 such that for $0 \leq s \leq r \leq t$
 \[
  U(t,t) = 1,\ \  U(t,r)U(r,s) = U(t,s)
 \]
 holds in the sense of \eqref{NONLINEAR:06} and,
 $(t,s) \longmapsto U(t,s) \in L(\B_{\alpha'},\B_{\alpha})$ is strongly continuous for all $\alpha',\alpha$ with $\alpha_* \leq \alpha' < \alpha \leq \alpha^*$.
\end{enumerate}
For the nonlinear part $B(u,t)$ we suppose that there exists $r \in (0,\infty]$ such that $(B(\cdot,t))_{t \in [0, T)}$ is a family of nonlinear operators in the scale $\B$ satisfying
\begin{enumerate}
 \item[B1.] For all $\alpha', \alpha$ with $\alpha_* \leq \alpha' < \alpha \leq \alpha^*$
 \[
  B_r^{\alpha'}(x) \times  [0, T ) \ni (u,t) \longmapsto B(u,t) \in \B_{\alpha}
 \]
 is jointly continuous.
\end{enumerate}
Next we give a rigorous definition of a solution to \eqref{NONLINEAR:00}. As we want to consider a solution in different spaces $\B_{\alpha}$, it is reasonable to expect that the corresponding intervals on which it is defined do also depend on $\alpha$. Moreover, as $\B_{\alpha'} \subset \B_{\alpha}$, for $\alpha' < \alpha$, it is also reasonable to expect that these intervals increase in $\alpha$. Both effects are catched by 
\[
  T(\alpha, \alpha_0; T') = \frac{\alpha - \alpha_0}{\alpha^* - \alpha_0}T', \qquad \alpha_* \leq \alpha_0 \leq \alpha \leq \alpha^*.
\]
Note that $T(\alpha, \alpha_0; T')$ is increasing in $\alpha$ such that $T(\alpha_0, \alpha_0;T') = 0$ and $T(\alpha^*, \alpha_0;T') = T'$. 
\begin{Definition}\label{DEF:00}
 Take $\alpha_0 \in [\alpha_*, \alpha^*)$  and $T' \in (0,T]$. A solution to \eqref{NONLINEAR:00} on the interval $[0,T')$
 in the scale $(\B_{\alpha})_{\alpha \in [\alpha_0, \alpha^*]}$ is a function $u: [0, T(\alpha^*, \alpha_0;T')) \longrightarrow \B_{\alpha^*}$
 such that, for all $\alpha \in (\alpha_0, \alpha^*]$, the following properties are satisfied:
 \begin{enumerate}
  \item[(i)] $u|_{[0, T(\alpha, \alpha_0;T'))} \in C\left( \left[ 0, T(\alpha, \alpha_0;T')\right); \B_{\alpha}\right)$ and $u(t) \in B_r^{\alpha}(x)$ for each $t \in  \left[ 0, T(\alpha, \alpha_0;T'))\right)$.
  \item[(ii)] For each $t \in  \left[ 0, T(\alpha, \alpha_0;T')\right)$
  we find $\alpha', \alpha'' \in [\alpha_0, \alpha)$ satisfying $\alpha' < \alpha''$
  and $t \in \left[ 0, T(\alpha', \alpha_0;T') \right)$ such that the following identity holds in $\B_{\alpha}$
  \begin{align}\label{NONLINEAR:07}
   u(t) = U_{\alpha_* \alpha}(t,0)x + \int \limits_{0}^{t} U_{\alpha'' \alpha}(t,s)B_{\alpha' \alpha''}(u(s),s) ds.
  \end{align}
 \end{enumerate}
\end{Definition}
It is worthwile to mention that, by \eqref{NONLINEAR:05} and \eqref{NONLINEAR:06}, this notion of a solution to \eqref{NONLINEAR:00} does not depend on the particular choice of $\alpha', \alpha''$,
at least as long as $0 \leq t < T(\alpha', \alpha_0;T')$ is satisfied. Hence in order to simplify notation, 
we omit the subscripts $\alpha', \alpha''$ in \eqref{NONLINEAR:07} whenever no confusion may arise. While $\alpha$ plays the role of the terminal Banach space of the solution,
the parameter $\alpha_0$ describes the minimal Banach space on which the solution can be defined.

\subsection{The main existence and uniqueness statement} 
Fix $x \in \B_{\alpha_*}$, $T > 0$ and assume that 
$U(t,s)$ and $B(u,t)$ are given as in A1 and B1. 
The following summarizes our main assumptions:
\begin{enumerate}
 \item[A2.] There exist constants $C_1 > 0$ and $\beta \in [0,\frac{1}{2})$ such that for all $\alpha', \alpha$ with $\alpha_* \leq \alpha' < \alpha \leq \alpha^*$
 \[
  \Vert U(t,s)\Vert_{\alpha' \alpha} \leq \frac{C_1}{(\alpha- \alpha')^{\beta}}, \ \ 0 \leq s \leq t < T
 \]
 holds.
 \item[A3.] For each $\alpha \in (\alpha_*, \alpha^*]$ there exists $C(x,\alpha) > 0$ such that
 \[
  \Vert U(t,0)x - U(s,0)x \Vert_{\alpha} \leq C(x,\alpha) |t-s|, \qquad 0\leq s, t \leq \frac{\alpha - \alpha_*}{\alpha^* - \alpha_*}T
 \]
 is satisfied.
\end{enumerate}
\begin{Remark}
 In many cases one has the stronger condition $U(t,s) \in L(\B_{\alpha})$ for all $\alpha \in [\alpha_*,\alpha^*]$, 
 e.g. if $A(t)$ generates an exponentially bounded evolution system (see e.g. \cite{PAZ83} for additional details on such evolution systems). 
 However, applications we have in mind rather deal with scales of 
 weighted $L^{\infty}$-type spaces. In such a case it is unusual to have $U(t,s) \in L(\B_{\alpha})$, but conditions A1 -- A3 may be still satisfied.
\end{Remark}
For the nonlinear part we suppose that:
\begin{enumerate}
 \item[B2.] There exists a constant $C_2 > 0$ such that for all $\alpha',\alpha$ with $\alpha_* \leq \alpha' < \alpha \leq \alpha^*$, 
 $t \in [0,T)$ and any $u,v \in B_r^{\alpha'}(x)$
 \[
  \Vert B(u,t) - B(v,t) \Vert_{\alpha} \leq \frac{C_2}{(\alpha - \alpha')^{1 - \beta}}\Vert u - v\Vert_{\alpha'}
 \]
 holds.
 \item[B3.] There exists a constant $C_3 > 0$ such that for all $t \in [0,T)$ and all $\alpha \in (\alpha_*, \alpha^*]$
 \[
  \Vert B(x,t) \Vert_{\alpha} \leq \frac{C_3}{\alpha - \alpha_*}
 \]
 is satisfied.
\end{enumerate}
\begin{Remark}
 Suppose that we have given a family of operators $\widetilde{B} = (\widetilde{B}_{\alpha})_{\alpha \in [\alpha_*,\alpha^*]}$ with the properties
 \begin{enumerate}
  \item[(i)] $\widetilde{B}_{\alpha}(u,t) \in \B_{\alpha}$ for all $u \in \B_{\alpha}$, all $t \in [0,T)$ and all $\alpha \in [\alpha_*, \alpha^*]$.
  \item[(ii)] For all $\alpha',\alpha \in [\alpha_*,\alpha^*]$ with $\alpha' < \alpha$ we have
  \[
   \widetilde{B}_{\alpha}(i_{\alpha'\alpha}(u),t) = i_{\alpha'\alpha}\widetilde{B}_{\alpha'}(u,t), \ \ u \in \B_{\alpha'}, \ \ t \in [0,T).
  \]
  \item[(iii)] There exists a constant $C_4 > 0$ such that for all $t \in [0,T)$
  \[
   \| \widetilde{B}_{\alpha}(u,t) - \widetilde{B}_{\alpha}(v,t) \|_{\alpha} \leq C_4 \| u-v\|_{\alpha}, \ \ u,v \in B_r^{\alpha}(x).
  \]
 Moreover $\widetilde{B}_{\alpha}$ is jointly continuous on $B_r^{\alpha}(x) \times  [0,T)$ for any $\alpha \in [\alpha_*,\alpha^*]$.
 \end{enumerate}
 Then $\mathcal{B}(u,t) := B(u,t) + \widetilde{B}(u,t)$ is a family of nonlinear operators in the scale $\B$ which satisfies assumptions B1 -- B3.
\end{Remark}
Next we define the largest time interval on which we are able to construct a solution to \eqref{NONLINEAR:00}.
 Take $\gamma \in (\beta, 1 - \beta)$, $\alpha_0 \in (\alpha_*, \alpha^*)$, $r' \in (0, r]$ and define
\begin{align*}
 \lambda_0(x,\alpha_0, \gamma, r', T) = \lambda_0 := \max \bigg \{ &\frac{\alpha^* - \alpha_*}{T}, \frac{2^{2\gamma+1 - \beta}C_1C_2}{\gamma-\beta}, \frac{4^{1-\beta}C_2(\alpha^*- \alpha_0)^{\beta}}{\gamma(1+\Vert x \Vert_{\alpha_*})} + \frac{2^{2+\gamma}C_1C_2}{\gamma},
 \\ & \frac{C(x)(\alpha^*- \alpha_0)}{r'} + \frac{C_1(\frac{C_3}{\alpha_0 - \alpha_*} + C(x))(\alpha^* - \alpha_0)(1+\Vert x \Vert_{\alpha_*})}{(1-\gamma)r'} \bigg\},
\end{align*}
where $C(x) = C(x,\alpha_0)$ is given by condition A3, 
and we use the convention $1/\infty := 0$.
Although this constant seems to be quite complicated, 
we provide a precise formula since it is used in the next section to study stability of the solution.
Then $[0, \frac{\alpha^* - \alpha_0}{\lambda_0})$ describes the largest possible time interval on which we may construct a solution. In this work we use
\[
 \lambda > \lambda_0\ \text{ and }\ T' = \frac{\alpha^* - \alpha_0}{\lambda}\ \text{ so that }\
 T(\alpha, \alpha_0, T') = \frac{\alpha - \alpha_0}{\lambda}.
\]
To simplify the proofs, we will write $\frac{\alpha - \alpha_0}{\lambda}$ instead of $T(\alpha, \alpha_0;T')$. The following is our main result, which provides existence and uniqueness of solutions to \eqref{NONLINEAR:00}.
\begin{Theorem}\label{NONLINEARTH:00}
 Suppose that A1 -- A3 and B1 -- B3 are satisfied for $T > 0$, $x \in \B_{\alpha_*}$ and either $(\beta = 0, r \in (0,\infty])$ or
 $(\beta \in (0,\frac{1}{2}), r = \infty)$. 
 Fix $\alpha_0 \in (\alpha_*, \alpha^*)$ and $\gamma \in (\beta, 1 - \beta)$. Then for each $\lambda > \lambda_0(x,\alpha_0,\gamma,r,T)$ there exists a solution $u_{\alpha_0, \lambda, \gamma}$ to \eqref{NONLINEAR:00} on the interval $[0, \frac{\alpha^* - \alpha_0}{\lambda})$ in the scale $(\B_{\alpha},\Vert \cdot\Vert_{\alpha})_{\alpha \in [\alpha_0, \alpha^*]}$ satisfying
   \begin{align}\label{EQ:03}
   \sup \limits_{\alpha \in (\alpha_0, \alpha^*]} \sup \limits_{0 \leq t < \frac{\alpha - \alpha_0}{\lambda}} (\alpha - \alpha_0 - \lambda t)^{\gamma}\| u_{\alpha_0, \lambda, \gamma}(t) \|_{\alpha} < \infty.
  \end{align}
  Moreover, this solution also satisfies the a-priori estimate
  \begin{align}\label{EQ:106}
   (\alpha - \alpha_0 - \lambda t)^{\gamma}\Vert B(u_{\alpha_0,\lambda,\gamma}(t),\tau)\Vert_{\alpha} \leq  \left(\frac{C_3}{\alpha_0 - \alpha_*} + C(x,\alpha_0)\right)(\alpha^* - \alpha_0)^{\gamma}(1+\Vert x \Vert_{\alpha_*}),
  \end{align} 
  where $0 \leq \tau \leq \frac{\alpha^* - \alpha_0}{\lambda}$, 
  $0 \leq t < \frac{\alpha - \alpha_0}{\lambda}$ and $\alpha \in (\alpha_0, \alpha^*]$. 
\end{Theorem}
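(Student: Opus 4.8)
The plan is to recast \eqref{NONLINEAR:00} as a fixed point problem and to solve it by a contraction argument in a weighted space of $\B$-valued functions modelled on the one in \cite{SAF95}. Concretely, I would work in the complete metric space $X$ of functions $u\colon[0,\frac{\alpha^*-\alpha_0}{\lambda})\to\B_{\alpha^*}$ that are continuous into each $\B_\alpha$ up to the moving time $\frac{\alpha-\alpha_0}{\lambda}$, that satisfy $u(t)\in B_{r'}^\alpha(x)$, and for which the norm
\[
\vertiii{u} := \sup_{\alpha\in(\alpha_0,\alpha^*]}\ \sup_{0\le t<\frac{\alpha-\alpha_0}{\lambda}}(\alpha-\alpha_0-\lambda t)^\gamma\Vert u(t)\Vert_\alpha
\]
is finite and bounded by a radius $R\sim(\alpha^*-\alpha_0)^\gamma(1+\Vert x\Vert_{\alpha_*})$ to be fixed below; here the constraint $u(t)\in B_{r'}^\alpha(x)$ is vacuous in the case $\beta\in(0,\frac12),\,r=\infty$ and is the substantive one in the case $\beta=0,\,r<\infty$. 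On $X$ I define $(\Phi u)(t)=U(t,0)x+\int_0^t U(t,s)B(u(s),s)\,\dm s$, the integral being understood through an $s$-dependent factorization $U_{\eta(s)\,\alpha}B_{\xi(s)\,\eta(s)}$ whose value is independent of the factorization by \eqref{NONLINEAR:05}--\eqref{NONLINEAR:06}. The whole point of the weight is that it degenerates exactly at the boundary $t\uparrow\frac{\alpha-\alpha_0}{\lambda}$, which is where the Ovcyannikov loss of derivatives is felt.

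The heart of the argument is the contraction estimate. For $u,v\in X$ and fixed $\alpha,t$ I would bound $\Vert(\Phi u)(t)-(\Phi v)(t)\Vert_\alpha$ by choosing, for each $s\in[0,t]$, the intermediate indices proportional to the available room $\delta(s):=\alpha-\alpha_0-\lambda s=(\alpha-\alpha_0-\lambda t)+\lambda(t-s)$, say $\alpha-\alpha''(s)$, $\alpha''(s)-\alpha'(s)$ and $\alpha'(s)-\alpha_0-\lambda s$ each a fixed fraction of $\delta(s)$. Applying A2 to $U(t,s)$, B2 to the difference $B(u(s),s)-B(v(s),s)$, and the very definition of $\vertiii{\cdot}$ to $\Vert u(s)-v(s)\Vert_{\alpha'(s)}$, the three singular factors carry the exponents $\beta$, $1-\beta$ and $\gamma$, which sum to $1+\gamma$; hence the integrand is a constant multiple of $\delta(s)^{-(1+\gamma)}\vertiii{u-v}$. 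The elementary fractional integral
\[
\int_0^t\big((\alpha-\alpha_0-\lambda t)+\lambda(t-s)\big)^{-(1+\gamma)}\,\dm s\le\frac{1}{\lambda\gamma}\,(\alpha-\alpha_0-\lambda t)^{-\gamma}
\]
then cancels the outer weight $(\alpha-\alpha_0-\lambda t)^\gamma$ exactly, leaving $\vertiii{\Phi u-\Phi v}\le\frac{c\,C_1C_2}{\lambda\gamma}\vertiii{u-v}$ with an absolute constant $c$ coming from the chosen fractions. Taking $\lambda$ larger than the corresponding entry of $\lambda_0$ makes this a strict contraction.

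For self-mapping and for the a priori bound \eqref{EQ:106} I would split $B(u(s),s)=\big(B(u(s),s)-B(x,s)\big)+B(x,s)$. The first summand is treated exactly as above (now against $\Vert u(s)-x\Vert_{\alpha'(s)}$, itself controlled through $\vertiii{u}$ and $\Vert x\Vert_{\alpha_*}$), producing the $C_1C_2/\gamma$ and $C_2(\alpha^*-\alpha_0)^\beta/(\gamma(1+\Vert x\Vert_{\alpha_*}))$ contributions to $\lambda_0$; the second is controlled by B3 through $\Vert B(x,s)\Vert_\eta\le C_3/(\eta-\alpha_*)$, and the initial term $U(t,0)x$ is handled by A3, which gives $\Vert U(t,0)x-x\Vert_\alpha\le C(x,\alpha)t$ and hence both the $C(x)$ contribution and the continuity in $t$. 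When $\beta=0$ and $r<\infty$, the pointwise constraint $\Vert(\Phi u)(t)-x\Vert_\alpha\le r'$ is verified by the same scheme but now measuring the integral in $\B_\alpha$ with the $U$-exponent $\beta$ against the weighted $B$-bound; there the relevant time integral has exponent $\beta+\gamma<1$, producing the factor $1/(1-\beta-\gamma)=1/(1-\gamma)$ and the last, $r'$-dependent, entry of $\lambda_0$. It is exactly the requirement $\beta<\gamma<1-\beta$ — possible only because $\beta<\frac12$ — that keeps all these fractional integrals convergent and the estimates uniform in $\alpha$ and $t$; the remaining $2^{2\gamma+1-\beta}C_1C_2/(\gamma-\beta)$ entry arises from the parallel estimate for the weighted norm of $B$ along the solution, where combining the $U$-singularity of exponent $\beta$ with the weight of exponent $\gamma$ leaves a time integral whose exponent is strictly below $1$ exactly when $\gamma>\beta$, which is what yields the factor $1/(\gamma-\beta)$.

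Once $\Phi$ is shown to map the ball into itself and to be a contraction, Banach's fixed point theorem yields a unique $u\in X$, which is the desired solution; \eqref{EQ:03} is precisely the statement $\vertiii{u}<\infty$, and \eqref{EQ:106} follows from the self-consistent $B$-bound established along the way. It then remains to check that $u$ is a solution in the sense of Definition~\ref{DEF:00}: continuity into each $\B_\alpha$ comes from A3 together with the integral estimate, and the identity \eqref{NONLINEAR:07} with a single admissible pair $\alpha',\alpha''\in(\alpha_0+\lambda t,\alpha)$ follows from the factorization independence \eqref{NONLINEAR:05}--\eqref{NONLINEAR:06}. The main obstacle, and the place where all the hypotheses are spent, is obtaining estimates that remain uniform as $t$ approaches the degenerate boundary $\frac{\alpha-\alpha_0}{\lambda}$: this forces the weighted norm, the $s$-dependent choice of intermediate indices, and the precise matching of the three exponents $\beta,\,1-\beta,\,\gamma$ so that the fractional-in-time integrals return exactly the compensating power $(\alpha-\alpha_0-\lambda t)^{-\gamma}$.
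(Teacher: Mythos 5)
Your overall architecture --- the weighted norm $\sup(\alpha-\alpha_0-\lambda t)^{\gamma}\Vert u(t)\Vert_{\alpha}$, the reformulation as a fixed point of $u\mapsto U(\cdot,0)x+\mathcal{T}(u)$, and the contraction estimate obtained by matching the exponents $\beta$, $1-\beta$, $\gamma$ against fractions of the available room --- is exactly the paper's (Lemmas \ref{LEMMA:03} and \ref{LEMMA:04}), and that part of your argument is sound.

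The genuine gap is in how you propose to obtain the a priori estimate \eqref{EQ:106}, i.e.\ the bound $\rho(t)^{\gamma}\Vert B(u(t),\tau)\Vert_{\alpha}\leq\mathrm{const}$ with $\rho(t)=\alpha-\alpha_0-\lambda t$. Your static split $B(u(t),\tau)=\bigl(B(u(t),\tau)-B(x,\tau)\bigr)+B(x,\tau)$ together with B2 gives at best
\[
\rho(t)^{\gamma}\Vert B(u(t),\tau)\Vert_{\alpha}\ \lesssim\ C_2\,\rho(t)^{\gamma-(1-\beta)}\Vert u(t)-x\Vert_{\alpha'}+\rho(t)^{\gamma}\frac{C_3}{\alpha-\alpha_*},
\]
and since $\gamma<1-\beta$ the exponent $\gamma-(1-\beta)$ is negative while $\Vert u(t)-x\Vert_{\alpha'}$ does not decay as $t\uparrow\frac{\alpha-\alpha_0}{\lambda}$ (it is only bounded by $r'$, or by a \emph{negative} power of $\rho$ via the weighted norm). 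So this route produces a bound of order $\rho(t)^{-(1-\beta)}$ on $\Vert B(u(t),\tau)\Vert_\alpha$, not the needed $\rho(t)^{-\gamma}$, and \eqref{EQ:106} does not follow. This is not merely a lost conclusion: in the case $\beta=0$, $r<\infty$ your own verification of the pointwise constraint $\Vert(\Phi u)(t)-x\Vert_{\alpha}\leq r'$ explicitly invokes ``the weighted $B$-bound'' with exponent $\gamma$ to make the time integral converge with factor $1/(1-\gamma)$; with the exponent $1-\beta=1$ that your static estimate actually delivers, $\int_0^t\Vert B(u(s),s)\Vert_{\alpha'}\,\dm s$ diverges logarithmically as $t$ approaches the boundary, so the iterates are not shown to stay in $B_r^{\alpha}(x)$ and B2 cannot even be applied to them. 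The missing idea is the paper's differential-inequality argument (Lemmas \ref{LEMMA:05} and \ref{LEMMA:06}): one sets $f(t)=\rho(t)^{\gamma}\Vert B(w(t),\tau)\Vert_{\alpha}$ for $w=U(\cdot,0)x$ and $w=U(\cdot,0)x+\mathcal{T}(u)$, and applies B2 to the \emph{time increment} $w(s)-w(t)$, which by A3 (and the integral structure of $\mathcal{T}$ together with the inductive bound $M(u)\leq\dots$) is of size $O(|s-t|)$; the factor $|s-t|$ compensates the singular $(\alpha-\alpha')^{-(1-\beta)}$, giving $(Dg)(t)\lesssim\rho(t)^{-(1-\beta)}$ and hence, by the comparison $\rho\,(Df)\leq-\gamma\lambda f+\gamma\lambda\cdot\mathrm{const}$, propagation of the bound $f\leq\mathrm{const}$ from $t=0$. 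This invariant set $S_x$ defined through $M(u)$ is the load-bearing device of the proof and is absent from your proposal. A secondary, smaller point: the theorem asserts existence for every $\lambda$ exceeding the \emph{specific} $\lambda_0$ of the paper, whose entries reflect the paper's particular ($t$-dependent, $s$-independent) choice of intermediate indices; your $s$-dependent splitting yields different numerical constants (e.g.\ $1/\gamma$ in place of $1/(\gamma-\beta)$ but with a larger power of the splitting ratio), so you would need to either re-derive $\lambda_0$ or check that your thresholds do not exceed the stated ones.
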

Note that for each $t \in [0, \frac{\alpha - \alpha_0}{\lambda})$ we find $\alpha' \in (\alpha_0, \alpha)$ 
such that $t < \frac{\alpha' - \alpha_0}{\lambda} < \frac{\alpha - \alpha_0}{\lambda}$ holds. This shows that $u_{\alpha_0, \lambda, \gamma}(t) \in \B_{\alpha'}$ and hence $\Vert B(u_{\alpha_0, \gamma}(t),\tau)\Vert_{\alpha}$ in \eqref{EQ:106} makes sense.
Assertion (b) provides uniqueness for solutions to \eqref{NONLINEAR:00}. From this we can deduce the following Corollary.
\begin{Corollary}
 Suppose that A1 -- A3 and B1 -- B3 are satisfied for $T > 0$, $x \in \B_{\alpha_*}$ and either $(\beta = 0, r \in (0,\infty])$ or
 $(\beta \in (0,\frac{1}{2}), r = \infty)$. 
 Fix $\alpha_0 \in (\alpha_*, \alpha^*)$ and $\gamma \in (\beta, 1 - \beta)$. Take $r' \in (0, r]$ such that $r' < \infty$ and let $\lambda > \lambda_0(x ,\alpha_0, \gamma, r',T)$. 
  Then there exists exactly one solution $u$ to \eqref{NONLINEAR:00} on the interval $[0, \frac{\alpha^* - \alpha_0}{\lambda})$ in the scale $(\B_{\alpha}, \| \cdot\|)_{\alpha \in [\alpha_0, \alpha]}$ which satisfies
  \[
   \sup \limits_{\alpha \in (\alpha_0, \alpha^*]} \sup \limits_{0 \leq t < \frac{\alpha - \alpha_0}{\lambda}} \| u(t) - x \|_{\alpha} \leq r'.
  \]
\end{Corollary}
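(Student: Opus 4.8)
The plan is to read both conclusions off Theorem~\ref{NONLINEARTH:00}, once the roles of $r$ and $r'$ are sorted out. Since $r'$ enters $\lambda_0$ only through the two denominators of its fourth entry, the map $\rho\mapsto\lambda_0(x,\alpha_0,\gamma,\rho,T)$ is nonincreasing, so $r'\le r$ gives $\lambda_0(x,\alpha_0,\gamma,r',T)\ge\lambda_0(x,\alpha_0,\gamma,r,T)$; in particular $\lambda>\lambda_0(x,\alpha_0,\gamma,r',T)$ exceeds both thresholds. For existence I would apply Theorem~\ref{NONLINEARTH:00} with the framework radius $r$ replaced by $r'$. This is legitimate because B1--B3 are conditions on the balls $B_r^{\alpha}(x)$ and hence hold a fortiori on the smaller balls $B_{r'}^{\alpha}(x)\subseteq B_r^{\alpha}(x)$, while $\lambda>\lambda_0(x,\alpha_0,\gamma,r',T)$ is exactly the matching threshold. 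The Theorem then produces a solution $u:=u_{\alpha_0,\lambda,\gamma}$ to \eqref{NONLINEAR:00} on $[0,\frac{\alpha^*-\alpha_0}{\lambda})$, and Definition~\ref{DEF:00}(i), now read with radius $r'$, gives $u(t)\in B_{r'}^{\alpha}(x)$ for every $\alpha\in(\alpha_0,\alpha^*]$ and $t\in[0,\frac{\alpha-\alpha_0}{\lambda})$. Taking the supremum yields $\sup_{\alpha,t}\|u(t)-x\|_\alpha\le r'$, the required existence statement.

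If one prefers not to re-invoke the construction but to argue only from the stated output of the Theorem, the same bound can be obtained a posteriori from \eqref{EQ:106}. Writing $u(t)-x=(U(t,0)x-x)+\int_0^t U(t,s)B(u(s),s)\,ds$ via \eqref{NONLINEAR:07} and $U(0,0)=1$, I would bound the first summand by A3 together with the scale monotonicity $\|\cdot\|_\alpha\le\|\cdot\|_{\alpha_0}$, producing a term of size $C(x,\alpha_0)\,t\le C(x)\frac{\alpha^*-\alpha_0}{\lambda}$, matched by the first summand of the fourth entry of $\lambda_0$. For the integral I would insert an intermediate index $\alpha''(s)$ bisecting the gap $\alpha-\alpha_0-\lambda s$, estimate $U(t,s)$ by A2 and $\|B(u(s),s)\|_{\alpha''}$ by \eqref{EQ:106}, and integrate $(\alpha-\alpha_0-\lambda s)^{-\beta-\gamma}$ in $s$; here $\gamma\in(\beta,1-\beta)$, i.e.\ $\beta+\gamma<1$, is what makes the integral finite, and the resulting constant is absorbed into the second summand of the fourth entry of $\lambda_0$ (this matching being the very reason that entry is recorded explicitly). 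Summing the two contributions and using $\lambda>\lambda_0$ again gives $\|u(t)-x\|_\alpha\le r'$. This route is the more computational one and is where the bulk of the constant bookkeeping would sit.

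For uniqueness I would show that the ball class embeds into the weighted class \eqref{EQ:03} and then quote the uniqueness assertion (b) of Theorem~\ref{NONLINEARTH:00}. Let $v$ be any solution to \eqref{NONLINEAR:00} on $[0,\frac{\alpha^*-\alpha_0}{\lambda})$ in the given scale with $\sup_{\alpha,t}\|v(t)-x\|_\alpha\le r'$. Then $\|v(t)\|_\alpha\le\|x\|_\alpha+r'\le\|x\|_{\alpha_*}+r'$ by scale monotonicity, and since $0\le\alpha-\alpha_0-\lambda t\le\alpha^*-\alpha_0$ on the admissible range, $(\alpha-\alpha_0-\lambda t)^\gamma\|v(t)\|_\alpha\le(\alpha^*-\alpha_0)^\gamma(\|x\|_{\alpha_*}+r')<\infty$. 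Thus $v$ satisfies \eqref{EQ:03}, and because $\lambda>\lambda_0(x,\alpha_0,\gamma,r',T)\ge\lambda_0(x,\alpha_0,\gamma,r,T)$ the uniqueness assertion (b) forces $v=u_{\alpha_0,\lambda,\gamma}=u$. Combined with the existence step this shows $u$ is the one and only solution in the prescribed ball class. The only genuine subtleties are the legitimacy of shrinking the radius to $r'$ in the existence step and this embedding of the ball class into the weighted class; neither is hard, so I do not expect a serious obstacle beyond the constant bookkeeping in the a posteriori route.
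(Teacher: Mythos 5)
Your uniqueness argument is exactly the paper's proof: the paper's entire proof of this Corollary is the two-line computation showing that $\sup_{\alpha,t}\Vert u(t)-x\Vert_{\alpha}\leq r'$ forces \eqref{EQ:03}, after which the uniqueness clause of Theorem \ref{NONLINEARTH:00} applies; your third paragraph reproduces this verbatim, including the monotonicity of $\lambda_0$ in $r'$. Everything you add about existence goes beyond what the paper writes down, and that is where the only real caveat lies. Your first existence route (re-running Theorem \ref{NONLINEARTH:00} with the framework radius $r$ replaced by $r'$) is not literally available when $\beta\in(0,\tfrac12)$: there the theorem's hypotheses pair $\beta>0$ with $r=\infty$, and Lemma \ref{LEMMA:06} explicitly declines to prove the $\Vert\cdot\Vert^{(0)}$-bound unless $\beta=0$, so you cannot feed it a finite radius. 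Your second, a posteriori route is the right repair, but the constant bookkeeping you defer is not entirely free: for $\beta>0$ the bisection produces $\int_0^t\rho(s)^{-\beta-\gamma}\,\dm s$ and hence factors $2^{\beta+\gamma}$ and $(1-\beta-\gamma)^{-1}$ that do not sit inside the fourth entry of $\lambda_0$ as written, which only carries $(1-\gamma)^{-1}$ (the paper derives \eqref{NONLINEAR:03} and \eqref{REST1} only in the case $\beta=0$). For $\beta=0$ both of your routes work and the bound $\Vert u-x\Vert^{(0)}\leq r'\lambda_0(r')/\lambda\leq r'$ drops straight out of \eqref{NONLINEAR:03} and \eqref{REST1} once $\lambda_0$ is evaluated at $r'$; for $\beta>0$ the existence half of the statement needs either a slightly enlarged $\lambda_0$ or a more careful tracking of intermediate indices, a gap that is as much the paper's as yours.
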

\begin{proof}
 Observe that $u$ satisfies
 \begin{align*}
 (\alpha - \alpha_0 - \lambda t)^{\gamma}\Vert u(t)\Vert_{\alpha} 
 &\leq (\alpha - \alpha_0 - \lambda t)^{\gamma}\Vert u(t) - x\Vert_{\alpha} + (\alpha - \alpha_0 - \lambda t)^{\gamma} \Vert x \Vert_{\alpha}
 \\ &\leq (\alpha^* - \alpha_0)^{\gamma}r' + (\alpha^* - \alpha_0)^{\gamma}\Vert x \Vert_{\alpha_*} < \infty,
 \end{align*}
 which shows that $u$ satisfies \eqref{EQ:03}.
\end{proof}

\subsection{Proof of Theorem \ref{NONLINEARTH:00}}
As a first step we reformulate \eqref{NONLINEAR:00} into a fixed point problem 
in certain weighted Banach spaces. 
Since $\alpha_0 \in (\alpha_*, \alpha^*)$ is fixed,
we omit the corresponding subscript in the definitions and arguments given below.
Denote by $S^{\gamma} = S^{\gamma}(\lambda)$ the Banach space of all functions 
$u: [0, T(\alpha^*)) \longrightarrow \B_{\alpha^*}$ which satisfy
\[
 \Vert u \Vert^{(\gamma)} = \sup \limits_{\bfrac{0 \leq t < T(\alpha)}{\alpha \in (\alpha_0, \alpha^*]}}\ (\alpha - \alpha_0 - \lambda t)^{\gamma} \Vert u(t)\Vert_{\alpha} < \infty
\]
with $T(\alpha) := \frac{\alpha - \alpha_0}{\lambda}$, and
\begin{align}\label{EQ:107}
 u|_{[0,T(\alpha))} \in C([0,T(\alpha));\B_{\alpha}), \qquad \forall \alpha \in (\alpha_0, \alpha^*].
\end{align} 
Let $u \in S^{\gamma}$, then $\| u - x \|^{(0)} \leq r$
is equivalent to
\[
u(t) \in B_r^{\alpha}(x), \qquad  \alpha \in (\alpha_0, \alpha^*],\  \ t \in [0,T(\alpha)).
\]
For such $u$ we define
\begin{align}\label{NONLINEAR:01}
 \mathcal{T}(u)(t) := \int \limits_{0}^{t}U(t,s)B(u(s),s)\dm s.
\end{align}
Then $\mathcal{T}u|_{[0,T(\alpha))} \in C( [0,T(\alpha)); \B_{\alpha})$ for all $\alpha \in (\alpha_0, \alpha^*]$. Indeed, take $\alpha \in (\alpha_0, \alpha]$ and $t \in [0,T(\alpha))$. Then we find $\alpha', \alpha'' \in [\alpha_0, \alpha)$ with
$\alpha' < \alpha''$ and $t \in [0,T(\alpha'))$. Hence 
\[
 U_{\alpha'' \alpha}(t,s)B_{\alpha' \alpha''}(u(s),s) \in \B_{\alpha}, \qquad s \in [0,t],
\]
and we conclude that the integral \eqref{NONLINEAR:01} exists in $\B_{\alpha}$.
Similarly to property (ii) from Definition \ref{DEF:00}, we find that
$\mathcal{T}(u)$ is independent of the particular choice of $\alpha', \alpha''$. The next Lemma is immediate, a proof is therefore omitted.
\begin{Lemma} \label{LEMMA:03}
 Let $u \in S^{\gamma}$. The following are equivalent:
 \begin{enumerate}
  \item[(i)] $u$ is a solution to \eqref{NONLINEAR:00} in the scale $(\B_{\alpha})_{\alpha\in [\alpha_0, \alpha^*]}$
  \item[(ii)] $u$ satisfies $u(t) \in B_r^{\alpha}(x)$
for each $t \in [0,T(\alpha))$ and each $\alpha \in (\alpha_0, \alpha^*]$,
and $u = U(\cdot,0)x + \mathcal{T}(u)$.
  \end{enumerate}
\end{Lemma}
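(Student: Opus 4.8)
The plan is to unwind the definitions, since both statements concern the same class of functions $u \colon [0, T(\alpha^*)) \longrightarrow \B_{\alpha^*}$ and we are given $u \in S^{\gamma}$ throughout. First I would note that membership of $u$ in $S^{\gamma}$ already encodes the continuity requirement \eqref{EQ:107}, which is exactly the regularity demanded in part (i) of Definition \ref{DEF:00}. Consequently, for $u \in S^{\gamma}$ the content of Definition \ref{DEF:00}(i) reduces to the ball condition $u(t) \in B_r^{\alpha}(x)$ for all $\alpha \in (\alpha_0, \alpha^*]$ and $t \in [0, T(\alpha))$, and this is precisely the first requirement appearing in (ii) of the Lemma. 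Hence it remains only to match the two forms of the integral equation.

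For this I would read the identity $u = U(\cdot, 0)x + \mathcal{T}(u)$ pointwise in $\B_{\alpha}$, recalling from \eqref{NONLINEAR:01} that $\mathcal{T}(u)(t) = \int_0^t U(t,s) B(u(s),s) \dm s$ and that $U(\cdot,0)x$ stands for $U_{\alpha_* \alpha}(t,0)x$. The key elementary observation is that for each $\alpha \in (\alpha_0, \alpha^*]$ and each $t \in [0, T(\alpha))$ one has $\alpha_0 + \lambda t < \alpha$, so there always exist $\alpha', \alpha'' \in [\alpha_0, \alpha)$ with $\alpha' < \alpha''$ and $t < T(\alpha')$. As already discussed in the paragraph introducing $\mathcal{T}$, the integrand $U_{\alpha'' \alpha}(t,s) B_{\alpha' \alpha''}(u(s),s)$ then lies in $\B_{\alpha}$, the integral exists there, and by \eqref{NONLINEAR:05}--\eqref{NONLINEAR:06} its value does not depend on the chosen $\alpha', \alpha''$. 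With the subscripts restored, $u = U(\cdot,0)x + \mathcal{T}(u)$ is therefore literally the identity \eqref{NONLINEAR:07} of Definition \ref{DEF:00}(ii), and conversely every $u \in S^{\gamma}$ satisfying \eqref{NONLINEAR:07} satisfies the abstract fixed-point identity.

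Reading these two equivalences in both directions yields the claim. I do not anticipate any genuine obstacle here: the argument is a direct comparison of definitions, and the only point needing (minor) care, namely the well-definedness and consistency of $\mathcal{T}(u)$ under the suppressed subscripts $\alpha', \alpha''$, has already been settled before the statement of the Lemma. This is why the proof may indeed be omitted as immediate.
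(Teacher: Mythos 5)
Your proposal is correct and coincides with what the paper intends: the lemma is a direct comparison of Definition \ref{DEF:00} with the definition of $S^{\gamma}$ and of $\mathcal{T}$, the continuity in \eqref{EQ:107} matching Definition \ref{DEF:00}(i) and \eqref{NONLINEAR:07} matching the fixed-point identity, with well-definedness of the suppressed subscripts already settled before the statement. The paper itself omits the proof as immediate for exactly these reasons.
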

In view of Lemma \ref{LEMMA:03} we proceed to show that 
$u = U(\cdot,0)x + \mathcal{T}(u)$ has for all $\lambda > \lambda_0(x,\alpha_0, \gamma,r,T)$ a unique solution in $S^{\gamma}$ satisfying $u(t) \in B_r^{\alpha}(x)$
for each $t \in [0,T(\alpha))$ and each $\alpha \in (\alpha_0, \alpha^*]$.
For this purpose we first show that $\mathcal{T}$ is a contraction operator.
\begin{Lemma}\label{LEMMA:04}
 For all $u,v \in S^{\gamma}$ with $\| u - x \|^{(0)}, \| v - x\|^{(0)} \leq r$ we have
 \[
  \Vert \mathcal{T}(u) - \mathcal{T}(v) \Vert^{(\gamma)} \leq \frac{\lambda_0}{\lambda} \Vert u - v \Vert^{(\gamma)},
 \]
\end{Lemma}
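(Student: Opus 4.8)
The goal is to estimate $\|\mathcal{T}(u)-\mathcal{T}(v)\|^{(\gamma)}$, so I would start by writing, for fixed $\alpha \in (\alpha_0,\alpha^*]$ and $t \in [0,T(\alpha))$, the difference
\[
 \mathcal{T}(u)(t) - \mathcal{T}(v)(t) = \int_0^t U(t,s)\bigl(B(u(s),s) - B(v(s),s)\bigr)\,\dm s
\]
in $\B_\alpha$. To apply the operator bound A2 and the Lipschitz bound B2 I must split the gap $\alpha - \alpha'$ into two pieces by inserting an intermediate index. The natural choice, given the weight $(\alpha - \alpha_0 - \lambda t)^\gamma$ and the fact that the solution at time $s$ lives in spaces with index strictly above $\alpha_0 + \lambda s$, is to pick an intermediate level $\beta(s)$ lying between the level where $u(s),v(s)$ are controlled and $\alpha$; a symmetric midpoint choice such as $\beta(s) = \tfrac12(\alpha_0 + \lambda s + \alpha)$ is what I would try first. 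Then A2 gives $\|U(t,s)\|_{\beta(s)\,\alpha} \leq C_1/(\alpha - \beta(s))^\beta$ and B2 gives $\|B(u(s),s)-B(v(s),s)\|_{\beta(s)} \leq C_2/(\beta(s) - (\alpha_0+\lambda s))^{1-\beta}\,\|u(s)-v(s)\|_{\alpha_0+\lambda s}$.

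\textbf{Key steps.} After the splitting, I would bound $\|u(s)-v(s)\|_{\alpha_0+\lambda s}$ using the definition of $\|\cdot\|^{(\gamma)}$: by construction this factor is at most $(\lambda(s'-s))^{-\gamma}\|u-v\|^{(\gamma)}$ for the appropriate index, so the integrand acquires a factor $(\alpha - \beta(s))^{-\beta}(\beta(s)-\alpha_0-\lambda s)^{-(1-\beta)}(\text{time weight})^{-\gamma}$. With the midpoint choice both $\alpha - \beta(s)$ and $\beta(s)-(\alpha_0+\lambda s)$ equal $\tfrac12(\alpha - \alpha_0 - \lambda s)$, so the spatial factors collapse to a constant multiple of $(\alpha - \alpha_0 - \lambda s)^{-1}$ (since $\beta + (1-\beta)=1$), and the $2^{2\gamma+1-\beta}$ prefactor matches the second term appearing in $\lambda_0$. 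The remaining task is the scalar integral
\[
 \int_0^t \frac{\dm s}{(\alpha - \alpha_0 - \lambda s)(\,\text{weight at time }s)^{\gamma}},
\]
which I would evaluate (or bound) to recover a factor $(\alpha - \alpha_0 - \lambda t)^{-\gamma}/\lambda$ after multiplying back by the target weight $(\alpha - \alpha_0 - \lambda t)^{\gamma}$.

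\textbf{The main obstacle.} The delicate point is the time integral, specifically showing that
\[
 (\alpha - \alpha_0 - \lambda t)^{\gamma}\int_0^t (\alpha - \alpha_0 - \lambda s)^{-1}(\alpha - \alpha_0 - 2\lambda s + \lambda t)^{-\gamma}\,\dm s \leq \frac{\mathrm{const}}{\lambda},
\]
where the exact form of the second factor depends on which index I assign to $\|u(s)-v(s)\|$. This is where the restriction $\gamma \in (\beta,1-\beta)$ and the precise constant $\tfrac{2^{2\gamma+1-\beta}C_1C_2}{\gamma-\beta}$ in the definition of $\lambda_0$ must be reproduced; the $1/(\gamma-\beta)$ singularity signals that the integral is handled by a substitution that produces a Beta-type or logarithmic bound whose coefficient blows up as $\gamma \downarrow \beta$. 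I would set $\sigma = \lambda s$, reduce to a model integral $\int_0^{\ell} \sigma^{-1}(\cdots)^{-\gamma}$ type expression, and bound it by pulling out the worst-case value of the slowly varying factor. Once the scalar estimate yields the bound $\lambda_0/\lambda$, taking the supremum over $\alpha$ and $t$ finishes the proof, and the continuity requirement \eqref{EQ:107} for $\mathcal{T}(u)-\mathcal{T}(v)$ is already guaranteed by the discussion preceding the Lemma.
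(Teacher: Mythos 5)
Your overall strategy --- split the index gap between the space where $u(s)-v(s)$ is controlled and $\B_\alpha$, apply A2 and B2 across the pieces, and reduce everything to a scalar integral in $s$ that produces the factor $1/\lambda$ --- is the right one, and it is essentially a single-pass version of what the paper does. (The paper organizes the same computation in two steps: the map $w \mapsto \int_0^{\cdot} U(\cdot,s)w(s)\,\dm s$ is bounded from $S^{\gamma+1-\beta}$ to $S^{\gamma}$ with norm $2^{\gamma}C_1/((\gamma-\beta)\lambda)$, using the $t$-dependent midpoint $\alpha'=\alpha-\tfrac12(\alpha-\alpha_0-\lambda t)$, and then $\Vert B(u(\cdot),\cdot)-B(v(\cdot),\cdot)\Vert^{(\gamma+1-\beta)}\le 2^{\gamma+1-\beta}C_2\Vert u-v\Vert^{(\gamma)}$ via a second, $s$-dependent midpoint.)

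As written, though, your sketch has a genuine gap. You apply B2 with lower index $\alpha_0+\lambda s$ and then need to bound $\Vert u(s)-v(s)\Vert_{\alpha_0+\lambda s}$. But the norm $\Vert\cdot\Vert^{(\gamma)}$ gives no control at that level: the weight $(\alpha'-\alpha_0-\lambda s)^{\gamma}$ vanishes at $\alpha'=\alpha_0+\lambda s$, and $u(s)$ is only guaranteed to lie in $\B_{\alpha'}$ for $\alpha'$ \emph{strictly} above $\alpha_0+\lambda s$, so B2 (which requires $u(s),v(s)\in B_r^{\alpha'}(x)$) cannot even be invoked there. You need a second interior index, i.e.\ a three-level split $\alpha_0+\lambda s<\alpha''(s)<\alpha'(s)<\alpha$, with B2 applied from $\alpha''(s)$ to $\alpha'(s)$ and the $\Vert\cdot\Vert^{(\gamma)}$-weight evaluated at $\alpha''(s)$; your placeholder $(\lambda(s'-s))^{-\gamma}$ gestures at this but is never reconciled with the gap $\beta(s)-(\alpha_0+\lambda s)$ you used in B2. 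Relatedly, the constant is asserted rather than derived: with your symmetric midpoint the spatial factors contribute only $2^{\beta}\cdot 2^{1-\beta}=2$, not $2^{2\gamma+1-\beta}$, and the denominator $\gamma-\beta$ (as opposed to, say, $\gamma$) only emerges from a specific choice of the interior indices --- for instance $\alpha-\alpha'=\tfrac12(\alpha-\alpha_0-\lambda t)$ together with $\alpha'-\alpha''(s)=\alpha''(s)-\alpha_0-\lambda s=\tfrac12(\alpha'-\alpha_0-\lambda s)$, which turns the $s$-integral into $\int_0^t(\alpha'-\alpha_0-\lambda s)^{-(1+\gamma-\beta)}\dm s\le (\gamma-\beta)^{-1}\lambda^{-1}(\alpha'-\alpha_0-\lambda t)^{-(\gamma-\beta)}$ and recombines with $(\alpha-\alpha_0-\lambda t)^{\gamma}$ to give exactly $2^{2\gamma+1-\beta}C_1C_2/((\gamma-\beta)\lambda)$. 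Since the lemma must produce the bound with the paper's specific $\lambda_0$, this bookkeeping cannot be waved away.
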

\begin{proof}
Let $u \in S^{\gamma + 1-\beta}$, fix $\alpha \in (\alpha_0,\alpha^*]$ and define $\alpha'$ by $\alpha = \alpha' + \frac{\rho(t)}{2}$ 
with $\rho(t) = \alpha - \alpha_0 - \lambda t$ and $0 \leq t < T(\alpha)$. Then
 \[
  \alpha - \alpha' = \frac{\rho(t)}{2} = \alpha' - \alpha_0 - \lambda t
 \]
 and using A2 we obtain for each $u \in S^{\gamma + 1 - \beta}$,
\begin{align*}
 (\alpha - \alpha_0 - \lambda t)^{\gamma}\left \Vert \int \limits_{0}^{t}U(t,s)u(s)\dm s\right \Vert_{\alpha} &\leq \frac{C_1}{(\alpha - \alpha')^{\beta}}(\alpha - \alpha_0 - \lambda t)^{\gamma}\int \limits_{0}^{t}\Vert u(s)\Vert_{\alpha'}\dm s
 \\ &\leq 2^{\beta}C_1\rho(t)^{\gamma - \beta}\int \limits_{0}^{t}(\alpha' - \alpha_0 - \lambda s)^{-(\gamma + 1-\beta)}\dm s\Vert u \Vert^{(\gamma + 1-\beta)}
 \\ &\leq \frac{2^{\beta}C_1}{(\gamma - \beta)\lambda}\rho(t)^{\gamma - \beta}\Vert u \Vert^{(\gamma + 1-\beta)}(\alpha' - \alpha_0 - \lambda t)^{-\gamma + \beta}
 \\ &= \frac{2^{\gamma}C_1}{(\gamma - \beta)\lambda}\Vert u \Vert^{(\gamma + 1-\beta)}.
\end{align*}
 Taking the supremum over all $t \in [0, T(\alpha))$ and $\alpha \in (\alpha_0, \alpha^*]$ yields
\begin{align}\label{NONLINEAR:10}
 \left \Vert \int \limits_{0}^{\bullet}U(\cdot,s)u(s)\dm s\right\Vert^{(\gamma)} \leq \frac{2^{\gamma}C_1}{(\gamma - \beta) \lambda}\Vert u \Vert^{(\gamma + 1-\beta)}.
\end{align}
Analogously to \eqref{NONLINEAR:10} one shows that, for $u,v \in S^{\gamma}$ with $\| u - x \|^{(0)}, \| v - x\|^{(0)} \leq r$,
\begin{align}\label{NONLINEAR:04}
 \Vert B(u(\cdot),\cdot) - B(v(\cdot),\cdot)\Vert^{(\gamma + 1 - \beta)} \leq 2^{\gamma+ 1 - \beta}C_2\Vert u - v\Vert^{(\gamma)},
\end{align}
see also \cite[Lemma 3.1]{SAF95} for a similar estimate.
 For the same $u,v$ we obtain from \eqref{NONLINEAR:10} and \eqref{NONLINEAR:04} 
\begin{align*}
 \Vert \mathcal{T}(u) - \mathcal{T}(v)\Vert^{(\gamma)} &\leq \frac{2^{\gamma}C_1}{(\gamma-\beta) \lambda}\Vert B(u(\cdot),\cdot) - B(v(\cdot), \cdot)\Vert^{(\gamma+ 1 - \beta)} 
 \\ &\leq \frac{2^{2\gamma+ 1 - \beta}C_1C_2}{(\gamma-\beta) \lambda}\Vert u - v \Vert^{(\gamma)} \leq \frac{\lambda_0}{\lambda}\Vert u -v\Vert^{(\gamma)}
\end{align*}
 which proves the assertion.
\end{proof}
Existence of a solution is obtained by the usual fixed point argument.
However, in order to guarantee that such iterations are well-defined we have to restrict $\mathcal{T}$ to a proper subspace of $S^{\gamma}$
defined as follows: Define
\begin{align}\label{NONLINEAR:02}
 S_x : = \left\{ u \in S^{\gamma}\ \bigg| \ \Vert u - x\Vert^{(0)} < r, \ \ M(u) \leq \left(\frac{C_3}{\alpha_0 - \alpha_*} + C(x)\right)(\alpha^* - \alpha_0)^{\gamma}(1+\Vert x \Vert_{\alpha_*})\right \}.
\end{align}
where $M(u)$ is given by
\[
 M(u) := \sup \limits_{ 0 \leq \tau \leq \frac{\alpha^* - \alpha_0}{\lambda}}\sup \limits_{\bfrac{0 \leq t < T(\alpha)}{\alpha \in (\alpha_0, \alpha^*]}}\ (\alpha - \alpha_0 - \lambda t)^{\gamma}\Vert B(u(t),\tau)\Vert_{\alpha}.
\] 
\begin{Lemma}\label{LEMMA:05}
 We have $U(\cdot,0)x \in S_x$.
\end{Lemma}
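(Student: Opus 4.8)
The plan is to check, for $w(t):=U(t,0)x$, the three requirements built into the definition \eqref{NONLINEAR:02} of $S_x$: membership in $S^{\gamma}$, the ball condition $\Vert w-x\Vert^{(0)}<r$, and the bound on $M(w)$. Membership in $S^{\gamma}$ is the routine part. Strong continuity of the evolution system from A1 gives $w|_{[0,T(\alpha))}\in C([0,T(\alpha));\B_{\alpha})$ for every $\alpha\in(\alpha_0,\alpha^*]$, so \eqref{EQ:107} holds. Finiteness of $\Vert w\Vert^{(\gamma)}$ follows from A2 applied with base index $\alpha_*$: since $x\in\B_{\alpha_*}$,
\[
 (\alpha-\alpha_0-\lambda t)^{\gamma}\Vert U(t,0)x\Vert_{\alpha}\le (\alpha^*-\alpha_0)^{\gamma}\frac{C_1}{(\alpha_0-\alpha_*)^{\beta}}\Vert x\Vert_{\alpha_*},
\]
uniformly in $t\in[0,T(\alpha))$ and $\alpha\in(\alpha_0,\alpha^*]$, using $\alpha-\alpha_*\ge\alpha_0-\alpha_*$ and $\alpha-\alpha_0-\lambda t\le\alpha^*-\alpha_0$; hence $w\in S^{\gamma}$.

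For the ball condition I would first note that $U(0,0)=1$ by A1, so the difference equals $U(t,0)x-U(0,0)x$ and A3 applies. The point to record is that for $\lambda>\lambda_0\ge(\alpha^*-\alpha_*)/T$ one has $t<T(\alpha)=(\alpha-\alpha_0)/\lambda<\frac{\alpha-\alpha_*}{\alpha^*-\alpha_*}T$, so every relevant $t$ lies inside the window where A3 is valid; combined with the scale monotonicity $\Vert\cdot\Vert_{\alpha}\le\Vert\cdot\Vert_{\alpha_0}$ this yields the time-Lipschitz bound $\Vert U(t,0)x-x\Vert_{\alpha}\le C(x)\,t$ with $C(x)=C(x,\alpha_0)$. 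Estimating $t<T(\alpha)\le(\alpha^*-\alpha_0)/\lambda$ and using $\lambda>\lambda_0\ge C(x)(\alpha^*-\alpha_0)/r'$ then forces $\Vert w-x\Vert^{(0)}\le C(x)(\alpha^*-\alpha_0)/\lambda<r'\le r$. In the case $r=\infty$ this step is vacuous by the convention $1/\infty:=0$.

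For the bound on $M(w)$, for fixed $\alpha,t,\tau$ I would split $B(U(t,0)x,\tau)=\big(B(U(t,0)x,\tau)-B(x,\tau)\big)+B(x,\tau)$, estimate the second summand by B3 via $\Vert B(x,\tau)\Vert_{\alpha}\le C_3/(\alpha-\alpha_*)\le C_3/(\alpha_0-\alpha_*)$, and the first by B2 across the intermediate index $\alpha'=\alpha-\rho(t)/2$, where $\rho(t)=\alpha-\alpha_0-\lambda t$, exactly as in the splitting of Lemma~\ref{LEMMA:04}; this choice guarantees $\alpha'-\alpha_0-\lambda t=\rho(t)/2>0$, i.e.\ $t<T(\alpha')$, so $U(t,0)x\in\B_{\alpha'}$ and the term is well defined. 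Feeding $\Vert U(t,0)x-x\Vert_{\alpha'}\le C(x)\,t$ into B2, multiplying by $\rho(t)^{\gamma}$, and using $\rho(t)\le\alpha^*-\alpha_0$ together with $\lambda t<\alpha-\alpha_0\le\alpha^*-\alpha_0$ collapses the $t$-dependence and, after taking suprema over $\tau,t,\alpha$, produces the right-hand side $\big(\tfrac{C_3}{\alpha_0-\alpha_*}+C(x)\big)(\alpha^*-\alpha_0)^{\gamma}(1+\Vert x\Vert_{\alpha_*})$, which is precisely the threshold in \eqref{NONLINEAR:02}.

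The routine parts are the continuity and the $S^{\gamma}$-bound; the real work is the scale/time bookkeeping in the last two steps. One must choose the intermediate index $\alpha'$ so that simultaneously $t<T(\alpha')$ (keeping $U(t,0)x$ in $\B_{\alpha'}$), the factor $(\alpha-\alpha')^{-(1-\beta)}=(\rho(t)/2)^{-(1-\beta)}$ remains controlled, and A3 is applied only within its valid time window $t<\frac{\alpha-\alpha_*}{\alpha^*-\alpha_*}T$; and one must collapse the various $\alpha$-dependent constants to the single base constant $C(x,\alpha_0)$ through the scale monotonicity so as to land exactly on the clean bound of \eqref{EQ:106}. Verifying that $\alpha'=\alpha-\rho(t)/2$ meets all of these constraints at once, and that the exponents $\gamma-\beta$ and $\gamma+1-\beta$ interact so that the $t$-dependence cancels rather than blows up near $t=T(\alpha)$, is the delicate point I expect to absorb most of the effort.
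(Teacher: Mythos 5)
Your first two steps (membership in $S^{\gamma}$ via A2, and the ball condition via A3 together with $\lambda>\lambda_0\ge C(x)(\alpha^*-\alpha_0)/r$) match the paper's proof. The gap is in the third step, the bound on $M(U(\cdot,0)x)$, and it is exactly at the point you yourself flag as ``delicate'': the exponents do \emph{not} cancel, and the direct estimate you propose fails. With the splitting $B(U(t,0)x,\tau)-B(x,\tau)$ and the intermediate index $\alpha'=\alpha-\rho(t)/2$, B2 and A3 give
\[
 \rho(t)^{\gamma}\,\Vert B(U(t,0)x,\tau)-B(x,\tau)\Vert_{\alpha}
 \;\le\; 2^{1-\beta}C_2\,C(x)\,t\,\rho(t)^{\gamma-1+\beta},
\]
and since $\gamma<1-\beta$ the exponent $\gamma-1+\beta$ is strictly negative while $t$ stays bounded away from $0$ as $t\nearrow T(\alpha)$; the right-hand side therefore blows up as $\rho(t)\to 0$. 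No other choice of $\alpha'$ rescues this: the constraint $t<T(\alpha')$ forces $\alpha-\alpha'<\rho(t)$, so the singular factor from B2 is always at least $\rho(t)^{-(1-\beta)}$, and $\Vert U(t,0)x-x\Vert_{\alpha'}\approx C(x)t$ provides no compensating smallness near $t=T(\alpha)$.

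This is precisely why the paper does not estimate $f(t)=\rho(t)^{\gamma}\Vert B(U(t,0)x,\tau)\Vert_{\alpha}$ directly but runs a differential-inequality (barrier) argument on it: one computes the Dini derivative $(Df)(t)=-\gamma\lambda\rho(t)^{\gamma-1}g(t)+\rho(t)^{\gamma}(Dg)(t)$, where the B2 singularity enters only through $(Dg)(t)\le 2^{1-\beta}C_2C(x)\rho(t)^{-1+\beta}$, so that $\rho(t)(Df)(t)\le-\gamma\lambda f(t)+2^{1-\beta}C_2C(x)\rho(t)^{\gamma+\beta}$ with the second term now bounded by a constant. The large negative drift $-\gamma\lambda f(t)$ (available because $\lambda>\lambda_0$) then forces $(Df)(t)<0$ whenever $f(t)$ exceeds the claimed threshold, and $f(0)\le\frac{C_3}{\alpha_0-\alpha_*}(\alpha^*-\alpha_0)^{\gamma}$ by B3, so $f$ can never cross it. You would need to replace your direct estimate by this (or an equivalent Gronwall-type) mechanism; as written, your argument does not close.
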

\begin{proof}
Using (A2) we obtain
\begin{align*}
 \| U(\cdot, 0)x\|^{(\gamma)} = \sup \limits_{\bfrac{0 \leq t < T(\alpha)}{\alpha \in (\alpha_0, \alpha^*]}}\| U(t,0)x \|_{\alpha}
 \leq \frac{C_1 \| x \|_{\alpha_*} }{ (\alpha_0 -  \alpha_*)^{\beta} } < \infty,
\end{align*} 
i.e. $U(\cdot,0)x \in S^{\gamma}$. 
By A3 we see that for $\alpha \in (\alpha_0, \alpha^*)$ and $0 \leq t < T(\alpha)$
\begin{align*}
 & \Vert U(t,0)x - x \Vert_{\alpha} \leq \Vert U(t,0)x - x\Vert_{\alpha_0} \leq C(x) t \leq C(x) \frac{\alpha - \alpha_0}{\lambda}
\end{align*}
and hence
\begin{align}\label{NONLINEAR:03}
 \Vert U(\cdot,0)x - x\Vert^{(0)} \leq C(x)\frac{\alpha^* - \alpha_0}{\lambda} \leq r \frac{\lambda_0}{\lambda}.
\end{align}
 Let us  show that, for all $0 \leq \tau \leq \frac{\alpha^* - \alpha_0}{\lambda}$, $\alpha \in (\alpha_0,\alpha^*]$ and all $0\leq t < T(\alpha)$, we have
\begin{align}\label{EQ:100}
 f(t) := \rho(t)^{\gamma}g(t) \leq \left(\frac{C_3}{\alpha_0 - \alpha_*}+C(x)\right)(\alpha^* - \alpha_0)^{\gamma}(1+\Vert x \Vert_{\alpha_*}),
\end{align}
where $\rho(t) := \alpha - \alpha_0 - \lambda t$ and $g(t) := \Vert B(U(t,0)x,\tau)\Vert_{\alpha}$. 
Conditions A1 and B1 imply that $f$ is continuous. Using
\[
 (Df)(t) = \underset{s \searrow t}{\limsup}\ \frac{f(s) - f(t)}{s-t}
\]
we obtain
\[
 (Df)(t) = - \gamma \lambda \rho(t)^{\gamma - 1}g(t) + \rho(t)^{\gamma }(Dg)(t).
\] 
Define $\alpha'$ by $\alpha = \alpha' + \frac{\rho(t)}{2}$, then $0 \leq t < T(\alpha') < T(\alpha)$ and, for $s \in (t,T(\alpha'))$,
\begin{align*}
 g(s) - g(t) &\leq \Vert B(U(s,0)x,\tau) - B(U(t,0)x,\tau) \Vert_{\alpha} \leq \frac{C_2}{(\alpha - \alpha')^{1 -\beta}} \Vert U(s,0)x - U(t,0)x\Vert_{\alpha'}.
\end{align*}
Dividing by $s - t$ and letting $s \searrow t$ we conclude from (A3)
\begin{align*}
 (Dg)(t) &\leq \frac{C_2C(x)}{(\alpha - \alpha')^{1-\beta}} = \frac{2^{1-\beta}C_2C(x)}{\rho(t)^{1-\beta}}.
\end{align*}
Using $\lambda > \lambda_0 > \frac{2^{1-\beta} C_2 (\alpha^* - \alpha_0)^{\beta }}{\gamma ( 1 + \| x \|_{\alpha_*})}$ we obtain 
\begin{align*}
 \rho(t)(Df)(t) &\leq - \gamma  \lambda f(t) + 2^{1-\beta}C_2C(x)\rho(t)^{\gamma+ \beta}
 \\ &\leq - \gamma \lambda f(t) + \gamma \lambda\left( \frac{C_3}{\alpha_0 - \alpha_*} + C(x)\right)(\alpha^* - \alpha_0)^{\gamma}(1+\Vert x \Vert_{\alpha_*})
\end{align*} 
and hence if $f(t) > ( \frac{C_3}{\alpha_0 - \alpha_*} + C(x))(\alpha^* - \alpha_0)^{\gamma}(1+\Vert x \Vert_{\alpha_*})$, then $(Df)(t) < 0$. 
By $x \in S_x$ we obtain
\[
 f(0) \leq \left( \frac{C_3}{\alpha_0 - \alpha_*} + C(x)\right)(\alpha^* - \alpha_0)^{\gamma}(1+\Vert x \Vert_{\alpha_*}).
\]
This implies \eqref{EQ:100} and hence
\begin{align}\label{NONLINEAR:11}
 M(U(\cdot,0)x) \leq \left( \frac{C_3}{\alpha_0 - \alpha_*} + C(x)\right)(\alpha^* - \alpha_0)^{\gamma}(1+\Vert x \Vert_{\alpha_*}).
\end{align}
\end{proof}
The next lemma is the only place in the proof of Theorem \ref{NONLINEARTH:00} where we have to distinguish between the cases $(\beta = 0, r \in (0,\infty])$ and 
$(\beta \in (0,\frac{1}{2}), r = \infty)$.
\begin{Lemma}\label{LEMMA:06}
 We have $U(\cdot,0)x + \mathcal{T}(u) \in S_x$ whenever $u \in S_x$.
\end{Lemma}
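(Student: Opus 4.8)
The plan is to show that $w:=U(\cdot,0)x+\mathcal{T}(u)$ satisfies the two conditions defining $S_x$ in \eqref{NONLINEAR:02}, namely $\Vert w-x\Vert^{(0)}<r$ and the $M$-bound. That $w\in S^{\gamma}$ with the continuity \eqref{EQ:107} is routine: the summand $U(\cdot,0)x$ is handled by Lemma \ref{LEMMA:05}, while for $\mathcal{T}(u)$ one applies \eqref{NONLINEAR:10} to $s\mapsto B(u(s),s)$, whose weighted norm is finite by the $M$-bound on $u$. Note that $\rho(t)^{\gamma}\Vert\mathcal{T}(u)(t)\Vert_{\alpha}$ then carries an extra factor $\rho(t)^{\gamma-\beta}$, which stays bounded precisely because $\gamma>\beta$.

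For the first condition I would split $\Vert w-x\Vert^{(0)}\le\Vert U(\cdot,0)x-x\Vert^{(0)}+\Vert\mathcal{T}(u)\Vert^{(0)}$. The first summand is at most $C(x)\frac{\alpha^*-\alpha_0}{\lambda}$ by \eqref{NONLINEAR:03}. For the second I choose $\alpha''=\alpha-\frac{\rho(t)}{2}$ and combine A2 with the pointwise bound $\Vert B(u(s),s)\Vert_{\alpha''}\le M(u)(\alpha''-\alpha_0-\lambda s)^{-\gamma}$, which is exactly what the $M$-bound on $u\in S_x$ gives when $\tau=s$; the $s$-integral is then controlled by $\frac{(\alpha^*-\alpha_0)^{1-\gamma}}{\lambda(1-\gamma)}$, leaving an overall factor $\rho(t)^{-\beta}$. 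Here the case distinction enters: when $\beta=0$ this factor disappears and, inserting $M(u)\le(\frac{C_3}{\alpha_0-\alpha_*}+C(x))(\alpha^*-\alpha_0)^{\gamma}(1+\Vert x\Vert_{\alpha_*})$, the two summands combine into the last entry of the maximum defining $\lambda_0$, so that $\lambda>\lambda_0$ yields $\Vert w-x\Vert^{(0)}<r'\le r$. When $\beta\in(0,\tfrac12)$ we are in the regime $r=\infty$, where $B_r^{\alpha}(x)=\B_{\alpha}$ and the requirement $w(t)\in B_r^{\alpha}(x)$ is vacuous; this is the sole place where the two cases must be separated.

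The harder part is the $M$-bound for $w$, and I would mimic the differential-inequality scheme of Lemma \ref{LEMMA:05}. Fixing $\tau$ and $\alpha$, set $g(t)=\Vert B(w(t),\tau)\Vert_{\alpha}$ and $f(t)=\rho(t)^{\gamma}g(t)$; since $w(0)=x$, B3 gives $f(0)\le(\alpha^*-\alpha_0)^{\gamma}\frac{C_3}{\alpha_0-\alpha_*}$, which is below the target. As before $(Df)(t)=-\gamma\lambda\rho^{\gamma-1}g(t)+\rho^{\gamma}(Dg)(t)$, and B2 yields $(Dg)(t)\le\frac{C_2}{(\alpha-\alpha')^{1-\beta}}(Dw)(t)$ with $\alpha'=\alpha-\frac{\rho(t)}{2}$, so everything reduces to an estimate of the upper Dini derivative of $w$ in $\B_{\alpha'}$. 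One then checks that whenever $f(t)$ exceeds the target value we have $(Df)(t)<0$, and concludes $f\le$ target by the comparison argument, finally taking the supremum over $\tau,t,\alpha$.

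The main obstacle is the estimate of $(Dw)(t)$. For $s>t$ the evolution property gives
\[
 w(s)-w(t)=\big(U(s,0)x-U(t,0)x\big)+\big(U(s,t)-1\big)\mathcal{T}(u)(t)+\int_t^s U(s,r)B(u(r),r)\,\dm r,
\]
where the first term contributes $C(x)$ via A3 and the integral term contributes an $O(s-t)$ bound governed by A2 and $M(u)$. The delicate piece is $\big(U(s,t)-1\big)\mathcal{T}(u)(t)$: strong continuity in A1 alone only forces it to be $o(1)$, so I would exploit that $\mathcal{T}(u)(t)$ lies in $\B_{\alpha''}$ for some $\alpha''\in(\alpha_0+\lambda t,\alpha')$ one scale-level above $\alpha'$, and use the evolution-system structure (the generator $A(t)$ acting between scale levels) to upgrade this to an $O(s-t)$ bound with constant controlled by $M(u)$ and the gap $\alpha'-\alpha''$. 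Once $(Dw)(t)$ is bounded in this way, the new contribution is of lower order in $\rho(t)$ than the damping $-\gamma\lambda\rho^{\gamma-1}g$, and the design of $\lambda_0$ — in particular the doubled entry $\frac{4^{1-\beta}C_2(\alpha^*-\alpha_0)^{\beta}}{\gamma(1+\Vert x\Vert_{\alpha_*})}$ together with the term $\frac{2^{2+\gamma}C_1C_2}{\gamma}$ — provides exactly the slack in the target needed to absorb it and force $(Df)(t)<0$. This gives $M(w)\le(\frac{C_3}{\alpha_0-\alpha_*}+C(x))(\alpha^*-\alpha_0)^{\gamma}(1+\Vert x\Vert_{\alpha_*})$ and hence $w\in S_x$.
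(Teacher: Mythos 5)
Your overall scheme coincides with the paper's: the bound $\Vert w-x\Vert^{(0)}\le\Vert U(\cdot,0)x-x\Vert^{(0)}+\Vert\mathcal{T}(u)\Vert^{(0)}$ with the case split ($\beta=0$ versus $r=\infty$) is exactly the argument given there, and the $M$-bound via the Dini-derivative comparison for $f=\rho^{\gamma}g$ is also the paper's route. The one step you single out as delicate is, however, where your argument has a genuine gap. You reduce everything to an $O(s-t)$ bound on $(U(s,t)-1)\mathcal{T}(u)(t)$ in $\B_{\alpha''}$ and propose to obtain it from ``the evolution-system structure (the generator $A(t)$ acting between scale levels)''. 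That is condition A4, which is \emph{not} among the hypotheses of Theorem \ref{NONLINEARTH:00}: the lemma must be proved from A1--A3 and B1--B3 alone. Even granting A4, the differentiability of $s\mapsto U(s,t)y$ holds only in the weaker scale $\E_{\alpha}$, and A4 provides no quantitative bound on $\Vert A(t)\Vert_{L(\B_{\alpha''},\E_{\alpha})}$ (only strong continuity), so you would get neither a $\B_{\alpha''}$-norm estimate --- which is what B2 requires to control $g(s)-g(t)$ --- nor a constant ``controlled by $M(u)$ and the gap''. As it stands, A1 gives only $o(1)$ for this term, and dividing by $s-t$ the Dini derivative $(Dg)(t)$ could be $+\infty$; your proposal does not close this.

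For what it is worth, you have put your finger on a real subtlety: the paper's own display bounds $\Vert\mathcal{T}(u)(s)-\mathcal{T}(u)(t)\Vert_{\alpha''}$ by $\int_t^s\Vert(U(s,\tau)-U(t,\tau))B(u(\tau),\tau)\Vert_{\alpha''}\,\dm\tau$, i.e. it retains only the increment over $[t,s]$ (note $U(t,\tau)$ is not even defined for $\tau>t$) and silently discards the contribution $(U(s,t)-1)\mathcal{T}(u)(t)=\int_0^t(U(s,\tau)-U(t,\tau))B(u(\tau),\tau)\,\dm\tau$, which is precisely your problematic term. So the paper arrives at $(Dg)(t)\le 4^{1-\beta}C_2C(x)\rho(t)^{\beta-1}+4C_1C_2\rho(t)^{-1}\Vert B(u(t),t)\Vert_{\alpha'}$ by keeping only the pieces you also control. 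A correct submission must either justify why that extra term contributes nothing to the upper Dini derivative (which needs more than A1--A3 as stated) or restructure the estimate so that it never appears; merely invoking the generator does not do it under the stated assumptions.
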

\begin{proof}
Take $u \in S_x$. Let us first show that $\| U(\cdot,0)x + \mathcal{T}(u) - x\|^{(0)} \leq r \frac{\lambda_0}{\lambda}$.
If $r = \infty$, then nothing has to be shown. Suppose that $r \in (0,\infty)$. In this case we necessarily have $\beta = 0$. Fix $\alpha \in (\alpha_0, \alpha^*]$ and $0 \leq t < T(\alpha)$. For $\alpha'$ defined by $\alpha = \alpha' + \frac{\rho(t)}{2}$ 
with $\rho(t) = \alpha - \alpha_0 - \lambda t$ we get $0 \leq t < T(\alpha') < T(\alpha)$.
Hence, by A2 and $u \in S_x$, we obtain 
\begin{align*}
 \Vert \mathcal{T}(u)(t)\Vert_{\alpha} &\leq \int \limits_{0}^{t}\Vert U(t,s)B(u(s),s)\Vert_{\alpha}\dm s 
 \\ &\leq C_1 \int \limits_{0}^{t}\Vert B(u(s),s)\Vert_{\alpha'}\dm s
 \\ &\leq C_1M(u)\int \limits_{0}^{t}(\alpha' - \alpha_0 - \lambda s)^{-\gamma}\dm s 
 \\ &\leq C_1M(u)\frac{(\alpha' - \alpha_0)^{1-\gamma }}{\lambda(1 - \gamma)}
 \\ &\leq \frac{C_1}{\lambda(1-\gamma)} \left(\frac{C_3}{\alpha_0 - \alpha_*} + C(x)\right)(\alpha^* - \alpha_0)(1+\Vert x \Vert_{\alpha_*})
\end{align*} 
which yields 
\begin{align*}
 \Vert \mathcal{T}(u)\Vert^{(0)} &\leq \frac{C_1}{(1-\gamma)\lambda} \left(\frac{C_3}{\alpha_0 - \alpha_*} + C(x)\right)(\alpha^* - \alpha_0)(1+\Vert x\Vert_{\alpha_*}).
\end{align*} 
Using \eqref{NONLINEAR:03} implies 
\begin{align}
 \notag & \Vert U(\cdot,0)x + \mathcal{T}(u) - x\Vert^{(0)} \leq \Vert U(\cdot,0)x - x\Vert^{(0)} + \Vert \mathcal{T}(u)\Vert^{(0)} 
 \\ \notag &\leq \frac{C(x)(\alpha^* - \alpha_0)}{\lambda} + \frac{C_1}{(1-\gamma)\lambda} \left(\frac{C_3}{\alpha_0 - \alpha_*} + C(x)\right)(\alpha^* - \alpha_0)(1+\Vert x\Vert_{\alpha_*})
 \\ &\leq r \frac{\lambda_0}{\lambda}. \label{REST1}
\end{align} 
For the second condition in the definition of $S_x$ we have to show that
\[
 M(U(\cdot,0)x + \mathcal{T}(u)) \leq \left(\frac{C_3}{\alpha_0 - \alpha_*} + C(x)\right)(\alpha^* - \alpha_0)^{\gamma}(1+\Vert x\Vert_{\alpha_*}).
\] 
Here we may treat both cases, $(\beta = 0, r \in (0,\infty])$ and $(\beta \in (0,\frac{1}{2}), r = \infty)$, simultaneously. 
Indeed, similarly to the proof of Lemma \ref{LEMMA:05}, it suffices to show that, for all $0 \leq \tau \leq T(\alpha^*)$, $\alpha \in (\alpha_0,\alpha^*]$ and 
$0 \leq t < T(\alpha)$, we have
\begin{align}\label{EQ:101}
 f(t) = \rho(t)^{\gamma}g(t) \leq \left(\frac{C_3}{\alpha_0 - \alpha_*}+C(x)\right)(\alpha^* - \alpha_0)^{\gamma}(1+\Vert x \Vert_{\alpha_*}),
\end{align}
where now $g(t) := \Vert B(U(t,0)x + \mathcal{T}(u)(t),\tau)\Vert_{\alpha}$. Again we obtain
\begin{align}\label{EQ:108}
 (Df)(t) = - \gamma \lambda \rho(t)^{\gamma - 1}g(t) + \rho(t)^{\gamma}(Dg)(t).
\end{align}
Recall $\alpha = \alpha' + \frac{\rho(t)}{2}$ and let $\alpha''$ be defined by $\alpha'' = \alpha' + \frac{\rho(t)}{4}$. 
Then $\alpha = \alpha'' + \frac{\rho(t)}{4}$ and
\[
 0 \leq t < T(\alpha') < T(\alpha'') < T(\alpha).
\]
Using A2, A3 and B2 we obtain, for $s \in (t, T(\alpha'))$,
\begin{align*}
 g(s) - g(t) 
 &\leq \frac{C_2}{(\alpha - \alpha'')^{1-\beta}}\Vert U(s,0)x - U(t,0)x\Vert_{\alpha''} + \frac{C_2}{(\alpha - \alpha'')^{1-\beta}}\Vert \mathcal{T}(u)(s) - \mathcal{T}(u)(t) \Vert_{\alpha''}
 \\ &\leq \frac{4^{1- \beta}C_2}{\rho(t)^{1 -\beta}}\Vert U(s,0)x - U(t,0)x\Vert_{\alpha'} + \frac{4^{1-\beta}C_2}{\rho(t)^{1-\beta}}\int \limits_{t}^{s}\Vert (U(s,\tau) - U(t,\tau))B(u(\tau),\tau)\Vert_{\alpha''}\dm \tau
 \\ &\leq \frac{4^{1- \beta}C_2}{\rho(t)^{1 -\beta}}C(x)|s-t| + \frac{4C_1C_2}{\rho(t)}\int \limits_{t}^{s}\Vert B(u(\tau),\tau)\Vert_{\alpha'}\dm \tau.
\end{align*}
Dividing by $s-t$, taking the limit $s \searrow t$ and finally using $u \in S_x$ gives
\begin{align*}
 (Dg)(t) &\leq 4^{1-\beta}C_2\rho(t)^{-1 +\beta}C(x) + \frac{4C_1C_2}{\rho(t)}\Vert B(u(t),t)\Vert_{\alpha'}
 \\ &\leq 4^{1-\beta}C_2C(x)\rho(t)^{-1 + \beta} + 4C_1C_2\rho(t)^{-1}M(u)(\alpha' - \alpha_0 - \lambda t)^{-\gamma}
 \\ &\leq 4^{1-\beta}C_2C(x)\rho(t)^{-1 + \beta} 
 \\ & \ \ \ + 2^{2+\gamma}C_1C_2\rho(t)^{-\gamma - 1}\left(\frac{C_3}{\alpha_0 - \alpha_*} + C(x)\right)(\alpha^* - \alpha_0)^{\gamma}(1+\Vert x\Vert_{\alpha_*}).
\end{align*}
Hence we obtain from \eqref{EQ:108} and the definition of $\lambda_0$
\begin{align*}
 \rho(t)(Df)(t) &\leq - \gamma \lambda f(t) + 4^{1-\beta}C_2C(x)\rho(t)^{\beta+\gamma} 
 \\ & \ \ + 2^{2+\gamma}C_1C_2\left(\frac{C_3}{\alpha_0 - \alpha_*} + C(x)\right)(\alpha^* - \alpha_0)^{\gamma}(1+\Vert x\Vert_{\alpha_*})
 \\ &\leq - \gamma \lambda f(t) + \gamma \lambda \left(\frac{C_3}{\alpha_0 - \alpha_*} + C(x)\right)(\alpha^* - \alpha_0)^{\gamma}(1+\Vert x \Vert_{\alpha_*}).
\end{align*}
The assertion follows by similar arguments to Lemma \ref{LEMMA:05}.
\end{proof}
We are now prepared to complete the proof of Theorem \ref{NONLINEARTH:00}.
\begin{proof}[Proof of Theorem \ref{NONLINEARTH:00}]
Define recursively a sequence $(u^{(k)})_{k \in \N_0}$ by setting $u^{(0)} = U(\cdot,0)x \in S_x$ and 
$u^{(k+1)} = U(\cdot,0)x + \mathcal{T}(u^{(k)})$ for $k \in \N_0$.
Lemma \ref{LEMMA:05} and Lemma \ref{LEMMA:06} imply $u^{(k)} \in S_x$ for all $k \in \N$ and from Lemma \ref{LEMMA:04} we obtain
\[
 \Vert u^{(k+1)} - u^{(k)} \Vert^{(\gamma)} \leq \left(\frac{\lambda_0}{\lambda}\right)^k\Vert u^{(1)} - u^{(0)}\Vert^{(\gamma)}.
\]
Hence $(u^{(k)})_{k \geq 0}$ is a Cauchy-sequence in $S^{\gamma}$ which has a limit $u = \lim_{k \to \infty} u^{(k)} \in S^{\gamma}$.
In view of \ref{NONLINEAR:03} and \eqref{REST1} we conclude that the limit $u$ satisfies $\| u - x \|^{(0)} \leq r$.
In particular, it is a solution to \eqref{NONLINEAR:00} which satisfies the desired estimate \eqref{EQ:106}. This proves the existence of a solution with the desired properties. Let $v$ be another solution with property \eqref{EQ:03}. 
Applying Lemma \ref{LEMMA:03} and Lemma \ref{LEMMA:04} to $v$ and $u_{\alpha_0, \lambda, \gamma}$ yields
\[
  \| v - u_{\alpha_0, \lambda, \gamma}\|^{(\gamma)}
  = \| \mathcal{T}(u) - \mathcal{T}(v) \|^{(\gamma)} \leq \frac{\lambda_0}{\lambda}\| u - v\|^{(\gamma)},
\]
which yields $v = u$ and hence completes the proof of Theorem \ref{NONLINEARTH:00}.
\end{proof}

\subsection{Existence of classical solutions}
Below we study existence of classical solutions to equation \eqref{NONLINEAR:08}. 
Therefore, let $(\E_{\alpha}, \vertiii{\cdot}_{\alpha})_{\alpha \in [\alpha_*, \alpha^*]}$ be another scale of Banach spaces with $\B_{\alpha} \subset \E_{\alpha}$
continuously embedded and $\vertiii{\cdot}_{\alpha} \leq \Vert \cdot \Vert_{\alpha}$ for all $\alpha \in [\alpha_*, \alpha^*]$. 
The next condition relates the evolution system $U(t,s)$ to its infinitesimal operator $A(t)$.
\begin{enumerate}
 \item[A4.] There exists a family of linear operators $(A(t))_{t \in [0,T)}$ such that, for all $\alpha' < \alpha$,
 \[
  \left [0, T\right) \ni t \longmapsto A(t) \in L(\B_{\alpha'}, \E_{\alpha})
 \]
 is strongly continuous. We have that $(t,s) \longmapsto U(t,s)y \in \E_{\alpha}$ is continuous for any $y \in \E_{\alpha'}$ and all $\alpha' < \alpha$.
 Moreover, for $y \in \B_{\alpha'}$ the function $(t,s) \longmapsto U(t,s)y$ is differentiable in $\E_{\alpha}$ and
 \begin{align}\label{EQ:103}
  \frac{\partial U(t,s)}{\partial t}y = A(t)U(t,s)y, \qquad \qquad \frac{\partial U(t,s)}{\partial s}y = - U(t,s)A(s)y
 \end{align}
 hold for $0 \leq s \leq t < T$. The case $s=t$ should be understood as right or left derivative correspondingly.
\end{enumerate}
Note that the equality in \eqref{EQ:103} is well-defined. Indeed, for given $\alpha' < \alpha$, $y \in \B_{\alpha'}$
and $0 \leq s \leq t < T$ take $\alpha'' \in (\alpha', \alpha)$, 
then $U(t,s)y \in \B_{\alpha'}$, $A(s)y \in \E_{\alpha''}$ and hence $A(t)U(t,s)y \in \E_{\alpha}$, $U(t,s)A(s)y \in \E_{\alpha}$.
\begin{Remark}
 If $\E_{\alpha} = \B_{\alpha}$, then conditions A1, A4 imply condition A3.
\end{Remark}
The definition of a classical solution is given below.
\begin{Definition}
 Take $\alpha_0 \in [\alpha_*, \alpha^*)$ and $T' \in (0,T]$. A classical $\B$-valued solution to \eqref{NONLINEAR:08} on the interval $[0. T')$ is a function $u: [0, T(\alpha^*, \alpha_0;T')) \longrightarrow \B_{\alpha^*}$ such that, for all $\alpha \in (\alpha_0, \alpha^*]$,
 the restriction
 \begin{align}\label{NONLINEAR:09}
  u|_{[0, T(\alpha, \alpha_0;T'))} \in C^1\left( [0, T(\alpha, \alpha_0;T'));\E_{\alpha}\right) \cap C\left( [0, T(\alpha, \alpha_0;T'));\B_{\alpha}\right) 
 \end{align}
 satisfies $u(t) \in B_r^{\alpha}(x)$ for all $t \in [0, [0, T(\alpha, \alpha_0;T'))$ and,
 it is a classical solution to \eqref{NONLINEAR:08} in $\E_{\alpha}$.
\end{Definition}
Concerning existence of classical solutions to \eqref{NONLINEAR:00} we obtain the following.
\begin{Theorem}\label{NONLINEARTH:01} 
 Assume that conditions A1 -- A4 and B1 -- B3 are satisfied
 either for $(\beta = 0, r \in (0,\infty])$ or $(\beta \in (0,\frac{1}{2}), r \in (0,\infty])$.
 Then, for each $\alpha_0 \in (\alpha_*, \alpha^*)$, $\gamma \in (\beta, 1- \beta)$ and $\lambda > \lambda_0(x,\alpha_0, \gamma,r,T)$, the solution $u_{\alpha_0,\lambda,\gamma}$ given by Theorem \ref{NONLINEARTH:00} is also a classical $\B$-valued solution to \eqref{NONLINEAR:08} on the interval $[0,\frac{\alpha^* - \alpha_0}{\lambda})$. 
\end{Theorem}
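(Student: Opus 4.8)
The plan is to show that the mild solution $u=u_{\alpha_0,\lambda,\gamma}$ produced by Theorem \ref{NONLINEARTH:00} is differentiable in $\E_{\alpha}$ and solves \eqref{NONLINEAR:08} there; the remaining requirements in \eqref{NONLINEAR:09}, namely $u\in C([0,T(\alpha));\B_{\alpha})$ and $u(t)\in B_r^{\alpha}(x)$, are already built into the definition of a solution to \eqref{NONLINEAR:00} and therefore come for free from Theorem \ref{NONLINEARTH:00} and Lemma \ref{LEMMA:03}. Fix $\alpha\in(\alpha_0,\alpha^*]$ and $t\in[0,T(\alpha))$. The decisive preparatory step is to insert intermediate indices $\alpha_0<\alpha_1<\alpha_2<\alpha$ with $t<T(\alpha_1)$. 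Since the singularity of the a-priori bound \eqref{EQ:106} sits at $s=T(\alpha_1)>t$, that estimate gives $\sup_{0\le s\le t}\Vert B(u(s),s)\Vert_{\alpha_1}<\infty$; combined with A2 and the uniform bound $\sup_{\tau}\Vert A(\tau)\Vert_{L(\B_{\alpha_2},\E_{\alpha})}<\infty$ on compact time intervals (which follows from the strong continuity in A4 via uniform boundedness), this makes $(s,\tau)\mapsto A(\tau)U(\tau,s)B(u(s),s)$ a bounded, continuous $\E_{\alpha}$-valued map on the relevant compact region. This uniform integrability is what licenses every limit below.

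First I would differentiate the linear part. Writing $u(t)=U(t,0)x+\mathcal{T}(u)(t)$ and applying \eqref{EQ:103} with $y=x\in\B_{\alpha_*}$, the term $t\mapsto U(t,0)x$ is differentiable in $\E_{\alpha}$ with $\frac{\dm}{\dm t}U(t,0)x=A(t)U(t,0)x$; continuity of this derivative in $\E_{\alpha}$ follows from $U(\cdot,0)x\in C([0,T(\alpha_2));\B_{\alpha_2})$ together with the strong continuity of $A(\cdot)$ in $L(\B_{\alpha_2},\E_{\alpha})$.

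The heart of the argument is the differentiability of $v:=\mathcal{T}(u)$. I split the difference quotient as
\begin{align*}
 \frac{\mathcal{T}(u)(t+h)-\mathcal{T}(u)(t)}{h}
 &= \frac{1}{h}\int \limits_{t}^{t+h}U(t+h,s)B(u(s),s)\,\dm s \\
 &\quad + \frac{1}{h}\int \limits_{0}^{t}\left(U(t+h,s)-U(t,s)\right)B(u(s),s)\,\dm s,
\end{align*}
and let $h\to0$. The boundary term converges to $U(t,t)B(u(t),t)=B(u(t),t)$ by continuity. For the interior term I use the fundamental theorem of calculus coming from \eqref{EQ:103}, namely $U(t+h,s)B(u(s),s)-U(t,s)B(u(s),s)=\int_{t}^{t+h}A(\tau)U(\tau,s)B(u(s),s)\,\dm\tau$ in $\E_{\alpha}$, followed by Fubini's theorem (legitimate by the uniform bound above) to obtain $\frac{1}{h}\int_{t}^{t+h}\left(\int_{0}^{t}A(\tau)U(\tau,s)B(u(s),s)\,\dm s\right)\dm\tau$. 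Continuity of the inner integral in $\tau$ yields the limit $\int_{0}^{t}A(t)U(t,s)B(u(s),s)\,\dm s$, and since $A(t)\in L(\B_{\alpha_2},\E_{\alpha})$ is bounded while the $\B_{\alpha_2}$-valued integral $v(t)=\int_{0}^{t}U(t,s)B(u(s),s)\,\dm s$ converges, this equals $A(t)v(t)$. The left derivative is treated identically, so $\frac{\dm v}{\dm t}=B(u(t),t)+A(t)v(t)$ in $\E_{\alpha}$.

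Adding the two contributions and using linearity of $A(t)$ together with $U(t,0)x,v(t)\in\B_{\alpha_2}$ gives
\[
 \frac{\dm u(t)}{\dm t}=A(t)U(t,0)x+A(t)v(t)+B(u(t),t)=A(t)u(t)+B(u(t),t)
\]
in $\E_{\alpha}$, which is precisely \eqref{NONLINEAR:08}. Continuity of $t\mapsto A(t)u(t)+B(u(t),t)$ in $\E_{\alpha}$ follows from B1 (joint continuity of $B$ and continuity of $u$ in $\B_{\alpha_1}$) and, for the first summand, from $u\in C([0,T(\alpha_2));\B_{\alpha_2})$ plus strong continuity of $A(\cdot)$; this upgrades $u$ to $C^1([0,T(\alpha));\E_{\alpha})$ and completes \eqref{NONLINEAR:09}. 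I expect the main obstacle to be exactly the scale-careful justification of differentiation under the integral sign: one must keep every object in the correct rung of the scale ($B(u(s),s)\in\B_{\alpha_1}$, $U(\tau,s)B(u(s),s)\in\B_{\alpha_2}$, $A(\tau)U(\tau,s)B(u(s),s)\in\E_{\alpha}$) and check that the choice $t<T(\alpha_1)$ keeps the integrand bounded, so that Fubini and the exchange of limit and integral are genuinely admissible.
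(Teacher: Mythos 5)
Your argument is correct and follows essentially the same route as the paper, which proves Theorem \ref{NONLINEARTH:01} via the ``mild implies classical'' direction of Lemma \ref{INTEGRAL}: there, too, one differentiates $U(t,0)x$ via A4 and shows that $t \longmapsto \int_0^t U(t,s)v(s)\,\dm s$ with $v(s)=B(u(s),s)$ is $C^1$ in $\E_{\alpha}$ with derivative $v(t)+A(t)\int_0^t U(t,s)v(s)\,\dm s$, keeping track of intermediate indices in the scale exactly as you do. The only difference is one of detail: the paper asserts this differentiation-under-the-integral step rather tersely, whereas you spell it out via the boundary/interior splitting of the difference quotient, the fundamental theorem of calculus from \eqref{EQ:103}, and Fubini.
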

The proof of this result is a direct consequence of the following observation.
\begin{Lemma}\label{INTEGRAL}
 Assume that conditions A1 -- A4 and B1 -- B3 are satisfied
 either for $(\beta = 0, r \in (0,\infty])$ or $(\beta \in (0,\frac{1}{2}), r \in (0,\infty])$.
 Let $\alpha_0 \in [\alpha_*, \alpha^*)$, $x \in \B_{\alpha_*}$ and $u:[ 0, \frac{\alpha^*- \alpha_0}{\lambda}) \longrightarrow \B_{\alpha^*}$. 
 Then, $u$ is a classical $\B$-valued solution to \eqref{NONLINEAR:08} if and only 
 if $u$ is a solution to \eqref{NONLINEAR:00} in the scale $(\B_{\alpha})_{\alpha \in [\alpha_0,\alpha^*]}$.
\end{Lemma}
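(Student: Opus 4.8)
The plan is to prove this equivalence as the scale-valued form of Duhamel's principle (the variation-of-constants formula), establishing the two implications separately. The unifying technical device is the index bookkeeping dictated by A4 and Definition \ref{DEF:00}: fix a terminal index $\alpha \in (\alpha_0, \alpha^*]$ and a time $t \in [0, \frac{\alpha - \alpha_0}{\lambda})$, and choose intermediate indices $\alpha_0 \leq \alpha' < \alpha'' < \alpha$ with $t < \frac{\alpha' - \alpha_0}{\lambda}$. Then $u(s) \in \B_{\alpha'}$ for all $s \in [0,t]$, so that $A(s)$ maps $\B_{\alpha'}$ into $\E_{\alpha''}$ by A4, $B(u(s),s) \in \B_{\alpha''}$ by B1, and $U(t,s)$ acts consistently between the relevant spaces. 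This is exactly the slack that makes every expression below well-defined.

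\textbf{From \eqref{NONLINEAR:08} to \eqref{NONLINEAR:00}.} Assume $u$ is a classical $\B$-valued solution, fix $t$, and consider $w(s) := U(t,s)u(s)$ for $s \in [0,t]$, viewed in $\E_\alpha$. Since $u$ is $C^1$ into $\E_\alpha$ and, by A4, $s \mapsto U(t,s)y$ is differentiable in $\E_\alpha$ for $y \in \B_{\alpha'}$, the product rule yields in $\E_\alpha$
\[
\frac{\dm}{\dm s}w(s) = \frac{\partial U(t,s)}{\partial s}u(s) + U(t,s)\frac{\dm u(s)}{\dm s} = -U(t,s)A(s)u(s) + U(t,s)\big(A(s)u(s) + B(u(s),s)\big) = U(t,s)B(u(s),s),
\]
where the two $A(s)u(s)$-terms cancel. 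Integrating over $[0,t]$ and using $U(t,t)=1$, $u(0)=x$ gives $u(t) = U(t,0)x + \int_0^t U(t,s)B(u(s),s)\,\dm s$ as an identity in $\E_\alpha$. Every term here in fact lies in $\B_\alpha$ — $u(t) \in \B_\alpha$ by continuity, $U(t,0)x \in \B_\alpha$, and the integral lies in $\B_\alpha$ once the indices $\alpha',\alpha''$ are inserted — so, by injectivity of the embedding $\B_\alpha \hookrightarrow \E_\alpha$, the identity holds in $\B_\alpha$ as well. This is precisely \eqref{NONLINEAR:07}.

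\textbf{From \eqref{NONLINEAR:00} to \eqref{NONLINEAR:08}.} Assume $u$ solves the integral equation. The summand $U(t,0)x$ is differentiable in $\E_\alpha$ with derivative $A(t)U(t,0)x$ by A4. For $I(t) := \int_0^t U(t,s)B(u(s),s)\,\dm s$ I split the increment $I(t+h)-I(t)$ into the boundary part $\int_t^{t+h} U(t+h,s)B(u(s),s)\,\dm s$ and the bulk part $\int_0^t \big(U(t+h,s)-U(t,s)\big)B(u(s),s)\,\dm s$. After dividing by $h$ and letting $h \to 0$, the former tends to $B(u(t),t)$ (by $U(t,t)=1$ and the continuity in B1), while the latter tends to $\int_0^t A(t)U(t,s)B(u(s),s)\,\dm s$ by A4. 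As $A(t)$ is bounded from $\B_{\alpha''}$ into $\E_\alpha$ and $I(t)$ may be taken in $\B_{\alpha''}$, it commutes with the Bochner integral, giving $A(t)I(t)$. Summing the derivatives yields $\frac{\dm u}{\dm t}(t) = A(t)U(t,0)x + B(u(t),t) + A(t)I(t) = A(t)u(t) + B(u(t),t)$ in $\E_\alpha$, with $u(0)=x$ immediate. The regularity \eqref{NONLINEAR:09} follows: continuity into $\B_\alpha$ and the ball condition $u(t) \in B_r^\alpha(x)$ hold by Definition \ref{DEF:00}(i), and $u \in C^1$ into $\E_\alpha$ because $t \mapsto A(t)u(t)+B(u(t),t)$ is continuous in $\E_\alpha$ by the strong continuity of $A$ in A4 and the continuity of $B$ in B1.

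I expect the main obstacle to be the rigorous differentiation under the integral sign in the second implication: while A4 supplies the pointwise derivative $\partial_t U(t,s)y = A(t)U(t,s)y$, interchanging $\lim_{h\to 0}$ with the integral over $s \in [0,t]$ demands a uniform, integrable domination of the difference quotients. Since $[0,t]$ is a compact subinterval strictly inside $[0,\frac{\alpha-\alpha_0}{\lambda})$, no endpoint blow-up of the singular weight occurs there; the delicate point is rather to control $A(\xi)U(\xi,s)B(u(s),s)$ in $\E_\alpha$ uniformly for $\xi$ near $t$ and $s \in [0,t]$, which one secures by combining the operator bound A2 with the strong continuity of $A$ from A4 and a uniform-boundedness argument. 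The second, ever-present difficulty is keeping the chain $\alpha' < \alpha'' < \alpha$ coherent so that every application of $A$, $U$ and $B$ lands in the prescribed space and the scales $\B$ and $\E$ interact exactly as A4 dictates.
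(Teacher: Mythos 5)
Your proposal follows essentially the same route as the paper: the forward direction via $\frac{\partial}{\partial s}\left(U(t,s)u(s)\right)=U(t,s)B(u(s),s)$ and integration, the reverse via differentiating $t\mapsto\int_0^tU(t,s)B(u(s),s)\,\dm s$, with the same $\alpha'<\alpha''<\alpha$ bookkeeping. The one step you pass over too quickly is the ``product rule'' in the forward direction: since $U(t,\cdot)$ is only \emph{strongly} continuous, the difference quotient of $U(t,s)u(s)$ produces a cross term $\left(U(t,s+h)-U(t,s)\right)\frac{u(s+h)-u(s)}{h}$ whose vanishing is not automatic --- the operators converge only strongly while the vectors vary with $h$. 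The paper handles this by observing that $K_s=\{\frac{u(s+h)-u(s)}{h}\,:\,|h|\le\delta\}\cup\{\frac{\dm u(s)}{\dm s}\}$ is compact in $\E_\alpha$ and that strong convergence is uniform on compacts; equivalently one can combine the uniform bound A2 with norm convergence of the difference quotients. You should make this argument explicit, as it is the only genuinely non-routine point of that implication. Your treatment of the reverse direction (boundary plus bulk split, domination for the interchange of limit and integral) is if anything more detailed than the paper's, which simply asserts the differentiability of the convolution term.
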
 
\begin{proof}
 Suppose that $u$ is a classical $\B$-valued solution to \eqref{NONLINEAR:08}. 
 Then, for each $\alpha \in (\alpha_0, \alpha^*]$ and all $0 \leq s < t < \frac{\alpha - \alpha_0}{\lambda}$
 \begin{align}\label{EQ:104}
  \frac{\partial}{\partial s}\left(U(t,s)u(s)\right) = U(t,s)B(u(s),s)
 \end{align}
 holds in $\E_{\alpha}$. Indeed, take $\alpha', \alpha''$ with $\alpha_0 < \alpha' < \alpha'' < \alpha$ and let $\delta > 0$ be small enough 
 such that $0 \leq s \leq s+h < t < \frac{\alpha' - \alpha_0}{\lambda}$, for all $|h| < \delta$.
 Write
 \begin{align*}
  &\ \frac{U(t,s+h)u(s+h) - U(t,s)u(s)}{h} 
 \\ &= \frac{U(t,s+h) - U(t,s)}{h}u(s) + \left( U(t,s+h) - U(t,s) \right) \frac{u(s+h) - u(s)}{h} + U(t,s) \frac{u(s+h) - u(s)}{h}.
 \end{align*}
 The first term tends, by A4, to $-U(t,s)A(s)u(s)$ in $\E_{\alpha}$, where we have used $u(s) \in \B_{\alpha'}$ and $A(s)u(s) \in \E_{\alpha}$.
 For the second term observe that $K_s = \{ \frac{u(s+h) - u(s)}{h} \ | \ |h| \leq \delta \} \cup \{ \frac{\dm u(s)}{\dm s} \}$ is compact in $\E_{\alpha}$
 and hence
 \[
  \vertiii{ \left( U(t,s+h) - U(t,s) \right) \frac{u(s+h) - u(s)}{h}}_{\alpha} \leq \sup \limits_{z \in K_s} \vertiii{ U(t,s+h)z - U(t,s)z}_{\alpha} \to 0, \ \ h \to 0
 \]
 by the strong continuity of $U(t,s)$. Finally, using $\frac{u(s+h) - u(s)}{h} \longrightarrow A(s)u(s) + B(u(s),s)$ in $\E_{\alpha}$
 together with the boundedness of $U(t,s)$ on $\E_{\alpha}$, shows that the last term tends to $U(t,s)( A(s)u(s) + B(u(s),s))$ in $\E_{\alpha}$.
 Altogether, this implies \eqref{EQ:104}. Integrating \eqref{EQ:104} over $s \in [0,t]$ yields \eqref{NONLINEAR:00}. 

 For the converse let $v$ be such that $v \in C([0,T(\alpha)); \B_{\alpha})$ for all $\alpha \in (\alpha_0, \alpha^*]$.
 Fix $\alpha \in (\alpha_0, \alpha^*]$, $t \in [0,\frac{\alpha - \alpha_0}{\lambda})$ and let $\alpha' \in (\alpha_0, \alpha)$ such that 
 $0 \leq t < \frac{\alpha' - \alpha_0}{\lambda} < \frac{\alpha - \alpha_0}{\lambda}$ holds.
 Then, $v(s) \in \B_{\alpha'}$ for $s \in [0,t]$ and hence $(t,s) \longmapsto U(t,s)v(s) \in \B_{\alpha}$ is continuous.
 Moreover, for each fixed $s \in [0,t]$, $t \longmapsto U(t,s)v(s)$ is continuously differentiable in $\E_{\alpha}$. Thus
 \[
  \left [0,\frac{\alpha - \alpha_0}{\lambda}\right ) \ni t \longmapsto \int \limits_{0}^{t}U(t,s)v(s)\dm s
 \]
 is continuous in $\B_{\alpha}$ and continuously differentiable in $\E_{\alpha}$ satisfying
 \[
  \frac{\dm}{\dm t} \int \limits_{0}^{t}U(t,s)v(s)\dm s = v(t) + A(t)\int \limits_{0}^{t}U(t,s)v(s)\dm s.
 \]
 Let $u$ solve \eqref{NONLINEAR:00}. 
 Applying above argumentation to $v(s) := B(u(s),s)$ where $u$ solves \eqref{NONLINEAR:00}, shows that $u(t)$ is differentiable in $\E_{\alpha}$.
 Hence differentiating \eqref{NONLINEAR:00} yields \eqref{NONLINEAR:08}. 
\end{proof}

\section{Stability with respect to parameters}
For the whole section we suppose that the conditions below are satisfied.
\begin{enumerate}
 \item[C1.] There exist $x_n,x \in \B_{\alpha_*}$ with $x_n \longrightarrow x$ as $n \to \infty$.
 \item[C2.] There exist linear operators $(U(s,t))_{0 \leq s \leq t < T}$ and 
 $(U_n(t,s))_{0 \leq s \leq t < T}$ for $n \in \N$, 
 satisfying properties A1 and A2 with constants $C_1 > 0$ and $\beta \in [0,\frac{1}{2})$ independent of $n \in \N$.
 \item[C3.] For any $\alpha \in [\alpha_*, \alpha^*]$ there exists a constant $C(\alpha)  > 0$ such that for all $0 \leq s,t < T$
 \[
  \Vert U_n(t,0)x_n - U_n(s,0)x_n\Vert_{\alpha} \leq C(\alpha)|t-s|, \ \ n \in \N
 \]
 holds.
 \item[C4.] There exist operators $B$ and $B_n$ satisfying properties B1 -- B3 with constants 
 \\$\lambda, r, C_2, C_3 > 0$ independent of $n \in \N$.
 \item[C5.] For all $\alpha', \alpha \in [\alpha_*, \alpha^*]$ with $\alpha' < \alpha$ and each $z \in \B_{\alpha'}$ we have
 \begin{align}\label{NONLINEAR:17}
  U_n(t,s)z \longrightarrow U(t,s)z, \ \ n \to \infty
 \end{align}
 in $\B_{\alpha}$ uniformly on compacts in $(t,s)$. If in addition $\Vert z - x \Vert_{\alpha'} \leq r$, then we have
 \[
  B_n(z,t) \longrightarrow B(z,t), \ \ n \to \infty
 \]
 in $\B_{\alpha}$ uniformly on compacts in $t$.
\end{enumerate}
 By $x_n \longrightarrow x$ one finds that $\| x_n\|_{\alpha_*}$ is bounded in $n$ and hence using the particular form of the constants $\lambda_0$ from previous section we may define 
 \[
  \lambda_1(\alpha_0,\gamma,r,T) = \max \left\{ \lambda_0(x, \alpha_0, \gamma,r,T),\ \sup \limits_{n \in \N}\lambda_0(x_n, \alpha_0,\gamma, r,T) \right \} \in (0,\infty).
 \]
The following is our main result for the stability of solutions to \eqref{NONLINEAR:00}
\begin{Theorem}\label{NONLINEARTH:02} 
 Suppose that conditions C1 -- C5 are satisfied either for $(\beta = 0,r \in (0,\infty])$ or $(\beta \in (0,\frac{1}{2}), r \in (0,\infty])$. Let $\alpha_0 \in (\alpha_*, \alpha^*)$, $\gamma \in (\beta, 1 - \beta)$ and $\lambda > \lambda_1(\alpha_0, \gamma,r,T)$. Denote by $u = u_{\alpha_0, \lambda, \gamma}$ the solution to \eqref{NONLINEAR:00} and by $u_n = u_{n, \alpha_0, \lambda, \gamma}$ the solutions to 
 \begin{align}\label{NONLINEAR:12}
  u_n(t) = U_n(t,0)x_n + \int \limits_{0}^{t}U_n(t,s)B_n(u_n(s),s)\dm s
 \end{align}
 on the interval $[0, \frac{\alpha^* - \alpha_0}{\lambda})$ in the scale $(\B_{\alpha})_{\alpha \in [\alpha_0, \alpha^*]}$.
 Then, for each $\alpha \in (\alpha_0, \alpha^*]$ and $T' \in (0, \frac{\alpha - \alpha_0}{\lambda})$, one has
 \begin{align}\label{NONLINEAR:13}
  \sup \limits_{t \in [0,T']} \| u_n(t) - u(t) \|_{\alpha} \longrightarrow 0, \ \ n \to \infty.
 \end{align} 
\end{Theorem}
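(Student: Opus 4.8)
The plan is to avoid proving convergence in the global weighted norm $\Vert\cdot\Vert^{(\gamma)}$, which cannot hold: condition C5 only supplies convergence on compact sets, whereas $\Vert\cdot\Vert^{(\gamma)}$ controls behaviour all the way up to the boundary $t\to T(\alpha)$, where the weight degenerates. Instead I would compare $u_n$ and $u$ through their Picard approximations. Recall from the proof of Theorem \ref{NONLINEARTH:00} that $u=\lim_k u^{(k)}$ and $u_n=\lim_k u_n^{(k)}$ in $S^\gamma$, where $u^{(0)}=U(\cdot,0)x$, $u^{(k+1)}=U(\cdot,0)x+\mathcal{T}(u^{(k)})$, and analogously $u_n^{(0)}=U_n(\cdot,0)x_n$, $u_n^{(k+1)}=U_n(\cdot,0)x_n+\mathcal{T}_n(u_n^{(k)})$ with $\mathcal{T}_n(v)(t)=\int_0^t U_n(t,s)B_n(v(s),s)\,\dm s$. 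Since C2 and C4 provide constants independent of $n$ and $\sup_n\Vert x_n\Vert_{\alpha_*}<\infty$ by C1, Lemma \ref{LEMMA:04} gives a contraction ratio $\theta:=\lambda_1/\lambda<1$ that is uniform in $n$; hence $\Vert u_n^{(k)}-u_n\Vert^{(\gamma)}\le C\theta^k$ and $\Vert u^{(k)}-u\Vert^{(\gamma)}\le C\theta^k$ with $C$ independent of $n$.

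Fix $\alpha\in(\alpha_0,\alpha^*]$ and $T'<\frac{\alpha-\alpha_0}{\lambda}$. On $[0,T']$ one has $\Vert\phi(t)\Vert_\alpha\le(\alpha-\alpha_0-\lambda T')^{-\gamma}\Vert\phi\Vert^{(\gamma)}$ because $\alpha-\alpha_0-\lambda T'>0$, so the triangle inequality yields
\begin{align*}
\sup_{t\in[0,T']}\Vert u_n(t)-u(t)\Vert_\alpha\le (\alpha-\alpha_0-\lambda T')^{-\gamma}\,2C\theta^{k}+\sup_{t\in[0,T']}\Vert u_n^{(k)}(t)-u^{(k)}(t)\Vert_\alpha .
\end{align*}
It therefore suffices to prove, for every \emph{fixed} $k$, that $\sup_{t\in[0,T']}\Vert u_n^{(k)}(t)-u^{(k)}(t)\Vert_\alpha\to0$ as $n\to\infty$: choosing first $k$ large to make the geometric term less than $\varepsilon/2$ (uniformly in $n$) and then $n$ large settles \eqref{NONLINEAR:13}. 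The decisive gain is that this reduces the problem to \emph{finitely many} Picard steps, where convergence on compact sets is exactly what is available.

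The convergence of the fixed iterates I would establish by induction on $k$, simultaneously for all levels in $(\alpha_0,\alpha^*]$. The base case $u_n^{(0)}=U_n(\cdot,0)x_n\to U(\cdot,0)x=u^{(0)}$ follows from $U_n(t,0)x_n-U(t,0)x=U_n(t,0)(x_n-x)+(U_n(t,0)-U(t,0))x$ together with A2 (uniform bound on $U_n$), C1 and C5. For the inductive step, fix $\alpha_0<\sigma<\tau<\alpha$ with $T'<\frac{\sigma-\alpha_0}{\lambda}$, so that $u^{(k)}(s),u_n^{(k)}(s)\in\B_\sigma$ for $s\le T'$ and $B,B_n$ map them into $\B_\tau$. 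Inserting intermediate terms, $u_n^{(k+1)}-u^{(k+1)}$ splits into the base-case term, the integral $\int_0^t(U_n(t,s)-U(t,s))B(u^{(k)}(s),s)\,\dm s$, the integral $\int_0^t U_n(t,s)[B_n(u_n^{(k)}(s),s)-B_n(u^{(k)}(s),s)]\,\dm s$, and $\int_0^t U_n(t,s)[B_n(u^{(k)}(s),s)-B(u^{(k)}(s),s)]\,\dm s$. The third integrand is controlled by B2 for $B_n$ and the inductive hypothesis at level $\sigma$; the first and last rely on C5 together with the observation that $\{B(u^{(k)}(s),s):s\in[0,T']\}$ and $\{u^{(k)}(s):s\in[0,T']\}$ are compact (continuous images of $[0,T']$) in $\B_\tau$ and $\B_\sigma$.

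The main obstacle is precisely this last point: C5 gives $U_n(t,s)z\to U(t,s)z$ and $B_n(z,\cdot)\to B(z,\cdot)$ for each \emph{individual} $z$, while the integrands demand convergence uniform over the compact set of arguments traced out by the fixed iterate, uniformly in $(t,s)$. I would bridge this by a standard $\varepsilon$-net argument: the differences $U_n(t,s)-U(t,s)$ are equibounded from $\B_\tau$ to $\B_\alpha$ by A2/C2 and $B_n-B$ is equi-Lipschitz by B2/C4, so pointwise convergence on a compact set upgrades to uniform convergence over it. A small bookkeeping step, needed so that B2 and C5 apply to the $n$-dependent iterate, is the bound $\Vert u_n^{(k)}(s)-x\Vert_\sigma\le r\lambda_1/\lambda+\Vert x_n-x\Vert_{\alpha_*}$, which is $<r$ for large $n$ by the strict margin $\lambda_1/\lambda<1$ and C1 (and symmetrically $u^{(k)}(s)\in B_r^\sigma(x_n)$ for large $n$). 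These estimates close the induction and hence the theorem.
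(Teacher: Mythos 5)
Your proposal is correct and follows essentially the same route as the paper: a uniform-in-$n$ contraction estimate reduces the claim to finitely many Picard iterates, and their convergence is proved by induction with exactly the decomposition into the four terms $U_n(\cdot,0)x_n-U(\cdot,0)x$, $\int(U_n-U)B(u^{(k)})$, $\int U_n[B_n(u_n^{(k)})-B_n(u^{(k)})]$ and $\int U_n[B_n(u^{(k)})-B(u^{(k)})]$ that the paper uses (its Lemma following Lemma \ref{NONLINEARLEMMA:02}). The only difference is cosmetic: you make explicit the equiboundedness/equi-Lipschitz argument upgrading the pointwise convergence of C5 to uniform convergence over the compact orbits, which the paper invokes implicitly.
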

 The rest of this section is devoted to the proof.
 Since \eqref{NONLINEAR:17} implies that (A3) also holds for $U(t,s)$ we find that $u$ and $u_n$ are, indeed, well-defined.
 Denote by $\mathcal{T}_n$ the nonlinear integral operator given as in \eqref{NONLINEAR:01} with $B$ and $U$ replaced by $B_n$ and $U_n$ and recall that $T(\alpha) = \frac{\alpha - \alpha_0}{\lambda}$.
 \begin{Lemma}\label{NONLINEARLEMMA:02}
  For each $\gamma \in (\beta, 1 - \beta)$, $\Vert \mathcal{T}_n(U_n(\cdot,0)x_n)\Vert^{(\gamma)}$ is uniformly bounded in $n\in \N$.
 \end{Lemma}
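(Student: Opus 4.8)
The plan is to reduce the statement to the single-data estimates already established in the proof of Theorem \ref{NONLINEARTH:00}, applied to the $n$-th data, and then to observe that every constant entering those estimates is uniform in $n$. First I would record that, under C1--C5 and for $\lambda > \lambda_1$, each triple $(x_n, U_n, B_n)$ satisfies A1--A3 and B1--B3 with constants independent of $n$: condition A2 holds with the common $C_1, \beta$ by C2; condition A3 holds with $C(x_n,\alpha_0) = C(\alpha_0)$ by C3; and B2, B3 hold with the common $C_2, C_3$ by C4. Moreover, since $x_n \longrightarrow x$ in $\B_{\alpha_*}$, the quantity $K := \sup_{n} \Vert x_n \Vert_{\alpha_*}$ is finite. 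Because $\lambda > \lambda_1 \geq \lambda_0(x_n, \alpha_0, \gamma, r, T)$ for every $n$, Theorem \ref{NONLINEARTH:00} and all its auxiliary lemmas are available for the $n$-th data.

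The key input is Lemma \ref{LEMMA:05}. Denoting by $M_n$ the functional $M$ from \eqref{NONLINEAR:02} with $B$ replaced by $B_n$, the conclusion \eqref{NONLINEAR:11} applied to the $n$-th data reads
\[
 M_n(U_n(\cdot,0)x_n) \leq \left(\frac{C_3}{\alpha_0 - \alpha_*} + C(\alpha_0)\right)(\alpha^* - \alpha_0)^{\gamma}(1+K) =: K_0,
\]
and, crucially, $K_0$ does not depend on $n$.

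It then remains to convert this bound, which carries the weight exponent $\gamma$, into the bound on $\Vert \mathcal{T}_n(U_n(\cdot,0)x_n)\Vert^{(\gamma)}$ that the lemma requests. Writing $w_n(s) := B_n(U_n(s,0)x_n, s)$, I would note that choosing $\tau = t$ in the definition of $M_n$ gives, for all admissible $t, \alpha$ (using $\alpha - \alpha_0 - \lambda t \leq \alpha^* - \alpha_0$ and $1 - \beta > 0$),
\[
 (\alpha - \alpha_0 - \lambda t)^{\gamma + 1 - \beta}\Vert w_n(t)\Vert_{\alpha} \leq (\alpha^* - \alpha_0)^{1-\beta}(\alpha - \alpha_0 - \lambda t)^{\gamma}\Vert w_n(t)\Vert_{\alpha} \leq (\alpha^* - \alpha_0)^{1-\beta} M_n(U_n(\cdot,0)x_n),
\]
so that $\Vert w_n \Vert^{(\gamma + 1 - \beta)} \leq (\alpha^* - \alpha_0)^{1-\beta} K_0$. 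The estimate \eqref{NONLINEAR:10} was derived using only A2 with the constants $C_1, \beta$; hence it holds verbatim with $U$ replaced by $U_n$, and applying it to $w_n$ yields
\[
 \left\Vert \int \limits_0^{\bullet} U_n(\cdot,s) w_n(s)\, \dm s \right\Vert^{(\gamma)} \leq \frac{2^{\gamma}C_1}{(\gamma - \beta)\lambda}\Vert w_n \Vert^{(\gamma + 1 - \beta)} \leq \frac{2^{\gamma}C_1(\alpha^* - \alpha_0)^{1-\beta}}{(\gamma - \beta)\lambda}K_0,
\]
which is finite and independent of $n$, as required.

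The only point demanding care is precisely this bookkeeping of $n$-uniformity, combined with the crude passage from the exponent $\gamma$ to $\gamma + 1 - \beta$ via the bounded factor $(\alpha^* - \alpha_0)^{1-\beta}$; there is no genuine analytic obstacle beyond what was already resolved in Lemmas \ref{LEMMA:04}--\ref{LEMMA:05}, since the transfer of those lemmas to the $n$-th data is automatic once the constants are seen to be common to all $n$.
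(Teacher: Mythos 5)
Your proposal is correct and follows essentially the same route as the paper: both arguments first obtain the $n$-uniform bound on $M_n(U_n(\cdot,0)x_n)$ by rerunning the argument of Lemma \ref{LEMMA:05} for the $n$-th data with the $n$-independent constants supplied by C1--C4, and then control $\Vert \mathcal{T}_n(U_n(\cdot,0)x_n)\Vert^{(\gamma)}$ through the A2 integral estimate. The only (immaterial) difference is the last step: the paper integrates $(\alpha'-\alpha_0-\lambda s)^{-\gamma}$ directly, exploiting $\gamma<1$ to get a factor $1/(1-\gamma)$, whereas you upgrade the weight to exponent $\gamma+1-\beta$ and reuse \eqref{NONLINEAR:10}, getting $1/(\gamma-\beta)$; both yield bounds uniform in $n$.
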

 \begin{proof}
  An analogous estimate to \eqref{NONLINEAR:11} shows that 
  \[
   \sup \limits_{n \in \N} \sup \limits_{0 \leq \tau \leq  \frac{\alpha^* - \alpha_0}{\lambda}} \ \sup \limits_{\bfrac{0 \leq t < T(\alpha)}{\alpha \in (\alpha_0, \alpha^*]}}(\alpha - \alpha_0 - \lambda t)^{\gamma} \Vert B_n(U_n(\cdot,0)x_n,\tau)\Vert_{\alpha} < \infty.
  \]
  Let $\alpha \in (\alpha_0, \alpha^*]$, $t \in [0, T(\alpha))$, $\rho(t) = \alpha - \alpha_0 - \lambda t$ and define $\alpha'$ by the relation
  $\alpha = \alpha' + \frac{\rho(t)}{2}$. Then $0 \leq t < \frac{\alpha' - \alpha_0}{\lambda}$ and the assertion follows from
  \begin{align*} 
   & (\alpha - \alpha_0 - \lambda t)^{\gamma}\left \Vert \int \limits_{0}^{t} U_n(t,s)B_n(U_n(s,0)x_n,s)\dm s\right\Vert_{\alpha} 
   \\ &\leq C_1 \frac{(\alpha - \alpha_0 - \lambda t)^{\gamma}}{(\alpha - \alpha')^{\beta}}\int \limits_{0}^{t}\Vert B_n(U_n(s,0)x_n,s)\Vert_{\alpha'}\dm s
   \\ &\leq C C_1 2^{\beta}\rho(t)^{\gamma - \beta} \int \limits_{0}^{t}(\alpha' - \alpha_0 - \lambda s)^{-\gamma}\dm s
   \\  &\leq \frac{2^{\beta}C C_1 \rho(t)^{\gamma - \beta}(\alpha^* - \alpha_0)^{1 - \gamma}}{\lambda (1-\gamma)}
   \leq \frac{2^{\beta}C C_1}{\lambda (1-\gamma)} (\alpha^* - \alpha_0)^{1-\beta}. 
  \end{align*}
 \end{proof}
 Denote by $(u^{(k)})_{k \in \N}$ the sequence defined by 
 $u^{(0)} = U(\cdot,0)x$, $u^{(k+1)} = U(\cdot,0)x + \mathcal{T}(u^{(k)})$. Similarly let $(u_n^{(k)})_{k \in \N}$ be given by $u_n^{(0)} = U_n(\cdot,0)x_n$ and
 $u_n^{(k+1)} = U_n(\cdot,0)x_n + \mathcal{T}_n(u_n^{(k)})$. For $\gamma \in (\beta, 1-\beta)$ we obtain
 \begin{align*}
  \Vert u^{(k)} - u\Vert^{(\gamma)} &\leq \sum \limits_{j=k}^{\infty}\Vert u^{(j+1)} - u^{(j)}\Vert^{(\gamma)} \leq \sum \limits_{j=k}^{\infty}\left(\frac{\lambda_0}{\lambda}\right)^j \Vert u^{(1)} - u^{(0)}\Vert^{(\gamma)}
  \\ &= \sum \limits_{j=k}^{\infty}\left(\frac{\lambda_0}{\lambda}\right)^j \Vert \mathcal{T}(U(\cdot,0)x)\Vert^{(\gamma)}
 \end{align*}
 and similarly, by Lemma \ref{NONLINEARLEMMA:02},
 \[
  \Vert u^{(k)}_n - u_n\Vert^{(\gamma)} 
  \leq \sum \limits_{j=k}^{\infty}\left( \frac{\lambda_0}{\lambda}\right)^j  \sup \limits_{n \in \N} \Vert \mathcal{T}_n(U_n(\cdot, 0)x_n)\Vert^{(\gamma)} < \infty.
 \]
 For $\alpha \in (\alpha_0, \alpha^*]$, write
 \begin{align*}
 \Vert u(t) - u_n(t)\Vert_{\alpha} &\leq (\alpha - \alpha_0 - \lambda t)^{-\gamma}\left(\Vert u^{(k)} - u\Vert^{(\gamma)} + \Vert u_n^{(k)} - u_n\Vert^{(\gamma)}\right)
 + \Vert u^{(k)}(t) - u_n^{(k)}(t)\Vert_{\alpha}
 \end{align*}
 and observe that the first two terms tend to zero uniformly in $n$ as $k \to \infty$. 
 Thus it suffices to show that for each $k$ and each $T' \in (0,T(\alpha))$
 \[
  \sup \limits_{t \in [0,T']} \Vert u^{(k)}(t) - u_n^{(k)}(t)\Vert_{\alpha} \to 0, \ \ n \to \infty.
 \]
 However, this is a consequence of the following lemma.
 \begin{Lemma}
  Let $v_n, v: [0, \frac{\alpha^* - \alpha_0}{\lambda}) \longrightarrow \B_{\alpha^*}$ be two functions such that for each $\alpha \in (\alpha_0, \alpha^*]$
  and all $n \in \N$ the conditions below are satisfied:
  \begin{enumerate}
   \item $\Vert v_n(t) - x_n \Vert_{\alpha} < r$ and $\Vert v(t) - x \Vert_{\alpha} < r$ for all $0 \leq t < \frac{\alpha - \alpha_0}{\lambda}$.
   \item $v_n|_{[0, \frac{\alpha - \alpha_0}{\lambda})}, v|_{[0, \frac{\alpha - \alpha_0}{\lambda})} \in C([0, \frac{\alpha - \alpha_0}{\lambda}); \B_{\alpha})$
   and $v_n(0) = x_n$, $v(0) = x$.
   \item For each $T' \in (0, \frac{\alpha - \alpha_0}{\lambda})$ one has
   \[
    \sup \limits_{t \in [0,T']} \Vert v_n(t) - v(t)\Vert_{\alpha} \to 0, \ \ n \to \infty.
   \]
  \end{enumerate}
  Then, for each $\alpha \in (\alpha_0, \alpha^*]$ and $T' \in (0, \frac{\alpha - \alpha_0}{\lambda})$, one has
  \[
   \sup \limits_{t \in [0,T]} \Vert U_n(t,0)x_n + \mathcal{T}_n(v_n)(t) - U(t,0)x - \mathcal{T}(v)(t)\Vert_{\alpha} \longrightarrow 0, \ \ n \to 0.
  \]
 \end{Lemma}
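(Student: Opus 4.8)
The plan is to estimate the difference in $\B_{\alpha}$ uniformly over $t \in [0,T']$ by splitting it into the homogeneous contribution $U_n(t,0)x_n - U(t,0)x$ and the integral contribution $\mathcal{T}_n(v_n)(t) - \mathcal{T}(v)(t)$. Throughout I fix $\alpha \in (\alpha_0,\alpha^*]$ and $T' \in (0,T(\alpha))$, and choose intermediate indices $\alpha_0 < \alpha' < \alpha'' < \alpha$ with $T' < T(\alpha')$; by hypothesis (2) this guarantees $v(s), v_n(s) \in \B_{\alpha'}$ for all $s \in [0,T']$, so that all operators applied below act between the appropriate levels of the scale. For the homogeneous term I would write $U_n(t,0)x_n - U(t,0)x = U_n(t,0)(x_n - x) + (U_n(t,0) - U(t,0))x$. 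The first summand is bounded in $\B_{\alpha}$ by $C_1(\alpha-\alpha_*)^{-\beta}\Vert x_n - x\Vert_{\alpha_*}$ using the uniform bound A2 supplied by C2 together with C1, which tends to $0$ uniformly in $t$. The second summand tends to $0$ in $\B_{\alpha}$ uniformly for $t$ in the compact set $[0,T']$ directly from the convergence in C5 applied to the fixed element $z = x \in \B_{\alpha_*}$.

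For the integral term I would decompose $U_n(t,s)B_n(v_n(s),s) - U(t,s)B(v(s),s)$ into the three pieces $U_n(t,s)[B_n(v_n(s),s) - B_n(v(s),s)]$, $U_n(t,s)[B_n(v(s),s) - B(v(s),s)]$, and $[U_n(t,s) - U(t,s)]B(v(s),s)$, and integrate each over $s \in [0,t]$. The first piece is controlled by the Lipschitz bound B2 (with the $n$-uniform constant from C4) followed by the uniform bound A2, reducing it to $\sup_{s \in [0,T']}\Vert v_n(s) - v(s)\Vert_{\alpha'}$, which vanishes by hypothesis (3). The remaining two pieces are where the real work lies, since C5 only supplies convergence that is pointwise in $z$, whereas the arguments $B_n(v(s),s)$, respectively $B(v(s),s)$, vary with $s$.

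The key step is therefore to upgrade the pointwise convergence in C5 to convergence that is uniform along the trajectories. To this end I would note that, by the joint continuity B1 and hypothesis (2), the sets $\{v(s) : s \in [0,T']\} \subset \B_{\alpha'}$ and $\{B(v(s),s) : s \in [0,T']\} \subset \B_{\alpha''}$ are compact; covering each by finitely many balls of radius $\delta$ centred at points $v(s_i)$, respectively $B(v(s_i),s_i)$, one estimates the difference at a general $s$ by the difference at the nearest centre plus two error terms of size $O(\delta)$ that are absorbed by the uniform Lipschitz bound B2, respectively by the uniform operator bound A2. For each of the finitely many fixed centres the error at the centre tends to $0$ uniformly for $(t,s)$ in the compact set $\{0 \le s \le t \le T'\}$ by C5; letting $\delta \to 0$ then yields the claimed uniform convergence, and integrating over $[0,t]$ produces the estimate uniformly in $t \in [0,T']$.

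The main obstacle is precisely this covering argument. A secondary technical point, needed to invoke the Lipschitz bound B2 for $B_n$ (whose natural centre is $x_n$ rather than $x$), is that $v(s) \in B_r^{\alpha'}(x_n)$ for all large $n$; this follows because $\sup_{s \in [0,T']}\Vert v(s) - x\Vert_{\alpha'} < r$ by continuity on the compact interval together with the strict bound in hypothesis (1), while $\Vert x_n - x\Vert_{\alpha'} \le \Vert x_n - x\Vert_{\alpha_*} \to 0$ by C1. Combining the three estimates for the integral term with the estimate for the homogeneous term then gives the desired uniform convergence.
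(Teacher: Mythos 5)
Your proposal is correct and follows essentially the same route as the paper: the same split into the homogeneous part and the three integral pieces $U_n(B_n(v_n)-B_n(v))$, $U_n(B_n(v)-B(v))$, $(U_n-U)B(v)$, with the last two handled by compactness of the trajectory sets $\{v(s)\}$ and $\{B(v(s),s)\}$ to upgrade the pointwise-in-$z$ convergence of C5 to uniform convergence (your explicit covering argument is exactly what the paper's ``$\sup_{z\in K}$'' step tacitly relies on, via the $n$-uniform bounds in C2 and C4). You also correctly identify and resolve the secondary point that $v(s)\in B_r^{\alpha'}(x_n)$ for large $n$, which the paper handles the same way with a $\delta$-margin from the strict inequality in hypothesis (1).
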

 \begin{proof}
 Write $\Vert U_n(t,0)x_n + \mathcal{T}_n(v_n)(t) - U(t,0)x - \mathcal{T}(v)(t)\Vert_{\alpha}  \leq I_1 + I_2$ where
 \begin{align*}
  I_1 = \| U_n(t,0)x_n - U(t,0)x \|_{\alpha}, \qquad I_2 = \| \mathcal{T}_n(v_n)(t) - \mathcal{T}(v)(t)\|_{\alpha}.
 \end{align*}
 Take $\e > 0$, then we find $n_0 \in \N$ such that, for all $n \geq n_0$ and $t \in [0,T']$,
 \begin{align*}
  I_1 &\leq  \Vert U_n(t,s)(x_n - x)\Vert_{\alpha} + \Vert (U_n(t,0) - U(t,0))x\Vert_{\alpha} \leq \e,
 \end{align*}
 where we have used C1, C2 and C5. For the second term we obtain
 \begin{align}\label{EQ:102}
  I_2 &\leq \| \mathcal{T}_n(v_n)(t) - \mathcal{T}_n(v)(t) \Vert_{\alpha} +  \Vert \mathcal{T}_n(v)(t) - \mathcal{T}(v)(t)\Vert_{\alpha}.
 \end{align}
 Take $\alpha', \alpha'' \in [\alpha_0, \alpha^*]$ such that
 $\alpha_0 < \alpha' < \alpha'' < \alpha$.
 Using C2 and C4 we find $n_1 \geq n_0$ such that, for all $n \geq n_1$, 
 \begin{align*}
  \Vert \mathcal{T}_n(v_n)(t) - \mathcal{T}_n(v)(t)\Vert_{\alpha} &\leq \int \limits_{0}^{t}\Vert U_n(t,s)(B_n(v_n(s),s) - B_n(v(s),s)\Vert_{\alpha}\dm s
  \\ &\leq \frac{C_1}{(\alpha - \alpha'')^{\beta}}\frac{C_2}{(\alpha'' - \alpha')^{1-\beta}}\int \limits_{0}^{t} \Vert v_n(s) - v(s)\Vert_{\alpha'}\dm s
  \\ &=  \frac{C_1}{(\alpha - \alpha'')^{\beta}}\frac{C_2}{(\alpha'' - \alpha')^{1-\beta}}T' \underset{s \in [0,T']}{\sup}\ \Vert v_n(s) - v(s)\Vert_{\alpha'} \leq \e.
 \end{align*}
 For the other term in \eqref{EQ:102} we obtain $\Vert \mathcal{T}_n(v)(t) - \mathcal{T}(v)(t)\Vert_{\alpha} \leq J_1 + J_2$, where
 \begin{align*}
  J_1 &= \int \limits_{0}^{t}\Vert U_n(t,s)(B_n(v(s),s) - B(v(s),s) )\Vert_{\alpha}\dm s,
\\ J_2 &= \int \limits_{0}^{t}\Vert (U_n(t,s) - U(t,s))B(v(s),s)\Vert_{\alpha}\dm s.
 \end{align*}
 Take $\alpha',\alpha''$ with $\alpha_0 < \alpha' < \alpha'' < \alpha$ such that $T' < \frac{\alpha' - \alpha_0}{\lambda}$.
 Then, $v_n(t),v(t) \in \B_{\alpha'}$ are continuous in $t \in [0,T']$ and hence
 the set $K_{T'} = \{ B(v(s),s)\ |\ s \in [0,T']\} \subset \B_{\alpha''}$ is compact. For large $n$, i.e. for $n \geq n_2 \geq n_1$, 
 we obtain by C5
 \[
  J_2 \leq \int \limits_{0}^{t}\underset{z \in K_{T'}}{\sup}\ \Vert (U_n(t,s) - U(t,s))z\Vert_{\alpha}\dm s < \e.
 \]
 Similarly the set $K_{T'}' = \{ v(s)\ |\ s \in [0,T']\} \subset \B_{\alpha'}$ is compact and there exists $\delta \in (0,1)$ such that
 $\Vert v(t) - x\Vert_{\alpha'} < (1-\delta)r$ for $t \in [0,T']$. Let $n_3 \geq n_2$ be such that $\Vert x - x_n \Vert_{\alpha_*} < \delta r$ holds for $n \geq n_3$. 
 Then for each $n \geq n_3$
 \[
  \Vert v(t) - x_n \Vert_{\alpha'} \leq \Vert v(t) - x \Vert_{\alpha'} + \Vert x - x_n \Vert_{\alpha'} < (1-\delta)r + \delta r = r.
 \]
 Hence, by C2 and C5, there exists $n_4 \geq n_3$ such that for all $n \geq n_4$
 \begin{align*}
  J_1 &\leq \frac{C_1}{(\alpha - \alpha'')^{\beta}}\int \limits_{0}^{t}\Vert B_n(v(s),s) - B(v(s),s)\Vert_{\alpha''}\dm s
  \\ &\leq \frac{C_1}{(\alpha - \alpha'')^{\beta}}\int \limits_{0}^{t}\underset{ z \in K_{T'}'}{\sup}\ \Vert B_n(z,s) - B(z,s)\Vert_{\alpha''}\dm s
  \leq \e.
 \end{align*}
 This proves the assertion.
 \end{proof}

\section{Epistatic mutation-selection balance model}

Let $X$ be a complete, locally compact metric space and $\sigma$ be a $\sigma$-finite, non-atomic Borel measure on $X$.
Elements of $X$ describe potential mutations and the value $\sigma(A)$ is the rate at which spontaneously a mutant allele arises from $A \subset X$. 
Such allele is characterized by its value $x \in A$. The collection of all mutant alleles is the so-called genotype $\gamma$. 
It is, by definition, a locally finite subset of $X$. The configuration space of all genotypes
\[
 \Gamma = \{ \gamma \subset X \ | \ |\gamma \cap X| < \infty \ \text{ for all compacts } K \subset X \}
\]
is equipped with the vague topology, i.e. the smallest topology such that $\gamma \longmapsto \sum_{x \in \gamma}f(x)$
is continuous for any continuous function $f$ having compact support. Then $\Gamma$ is a Polish space (see \cite{KK06}).

We assign to each genotype $\gamma \in \Gamma$ a \textit{selection cost} functional
\[
 \Phi(t,\gamma) = \sum \limits_{x \in \gamma} h(t,x) + \frac{1}{2}\sum \limits_{x \in \gamma}\sum \limits_{y \in \gamma \backslash x}\psi(t,x,y),
\]
where $h,\psi$ are non-negative, measurable functions with $\psi(t,x,y) = \psi(t,y,x)$, for all $x,y \in X$ and $t \geq 0$.
We suppose from now on that the following two conditions are fulfilled:
\begin{enumerate}
 \item[(D1)] $h \in C(\R_+; L^1(X, \sigma) \cap L^{\infty}(X,\sigma))$ and $\psi \in C(\R_+; L^1(X^2, \sigma^{\otimes 2}) \cap L^{\infty}(X^2, \sigma^{\otimes 2}))$.
 \item[(D2)] For any $T > 0$ we have
 \[
  \sup \limits_{(t,x) \in [0,T] \times X} \int \limits_{X}\psi(t,x,y)\dm \sigma(y) < \infty.
 \]
 \item[(D3)] $0 \leq a \in C_b(\R_+; L^{\infty}(X,\sigma))$.
\end{enumerate}

\subsection{The nonlinear Fokker-Planck equation}
Borel probability measures $\mu$ on $\Gamma$ describe the distribution of potential mutations. We call such measures states of the dynamics.
The associated dynamics of potential mutations is described by a family of states $(\mu_t)_{t \in [0,T)}$, which shall satisfy for a suitable 
collection of functions $F: \Gamma \longrightarrow \R$ the nonlinear Fokker-Planck equation \eqref{INTRO:01}.

Following \cite{FKKO07}, we give now a rigorous formulation to equation \eqref{INTRO:01}.
Set $\Gamma_0 = \{ \eta \subset X \ | \ |\eta| < \infty\}$ and denote by $B_{bs}(\Gamma_0)$ the space of all bounded functions $G$ 
such that there exists a compact $\Lambda = \Lambda(G) \subset X$ and $N = N(G) \in \N$ with $G(\eta) = 0$, whenever $|\eta| > N$ or $\eta \cap \Lambda^c \neq \emptyset$. Define a measure $\lambda$ on $\Gamma_0$ by
\[
 \int \limits_{\Gamma_0}G(\eta)\dm \lambda(\eta) = G( \{\emptyset \} ) + \sum \limits_{n=1}^{\infty}\frac{1}{n!}\int \limits_{X^n}G(\{x_1,\dots, x_n\})\dm \sigma^{\otimes}(x_1,\dots,x_n),
\]
where $\sigma^{\otimes n}$ is the product measure on $X^n$ and $G \in B_{bs}(\Gamma_0)$.
This definition clearly extends to all measurable functions $G$ such that one side of the equality is finite for $|G|$.

Let $F \in \mathcal{FP}(\Gamma)$ iff there exists $G \in B_{bs}(\Gamma_0)$ such that
\begin{align}\label{EQ:00}
 F(\gamma) = \sum \limits_{\bfrac{\eta \subset \gamma}{|\eta| < \infty}} G(\eta), \ \ \gamma \in \Gamma.
\end{align}
Note that for each $F \in \mathcal{FP}(\Gamma)$ there exists a compact $\Lambda \subset \R^d$, $N \in \N$ and $A > 0$ such that 
$F(\gamma) = F(\gamma \cap \Lambda)$ and $|F(\gamma)| \leq A( 1 + |\gamma \cap \Lambda|)^N$ hold for all $\gamma \in \Gamma$.
From this it is easily seen that the integral in \eqref{EQ:105} is well-defined for any $F \in \mathcal{FP}(\Gamma)$ and any $\gamma \in \Gamma$.
For the remaining terms in \eqref{EQ:105} additional conditions on the state evolution $(\mu_t)_{t \in [0,T)}$ has to be imposed
which is summarized in the following definition.
\begin{Definition}
 Fix $T > 0$.
 A family of probability measures $(\mu_t)_{t \in [0,T)}$ on $\Gamma$
 is a weak solution to \eqref{INTRO:01} if for any $F \in \mathcal{FP}(\Gamma)$ the conditions below are satisfied:
 \begin{enumerate}
  \item[(a)] $F, L(t,\mu_t)F, \Phi(t,\cdot) \in L^1(\Gamma, \dm \mu_t)$ for all $t \in [0,T)$.
  \item[(b)] $[0,T) \ni t \longmapsto \langle L(t,\mu_t)F, \mu_t \rangle$ is locally integrable.
  \item[(c)] $(\mu_t)_{t \in [0,T)}$ satisfies
  \[
   \langle F, \mu_t \rangle = \langle F, \mu_0 \rangle + \int \limits_{0}^{t}\langle L(t,\mu_t)F, \mu_t \rangle \dm t, \ \ t \in [0,T).
  \]
 \end{enumerate}
\end{Definition}

\subsection{Evolution of correlation functions}
Since \eqref{INTRO:01} is a nonlinear Fokker-Planck equation over an infinite dimensional phase space $\Gamma$,
it is unnatural to expect that uniqueness holds in the class of all probability measures on $\Gamma$.
Below we study, following \cite{FKKO07}, uniqueness to \eqref{INTRO:01} in a suitable subclass of states.
Namely, let $\mu$ be a probability measure on $\Gamma$ with finite local moments, i.e. $\int_{\Gamma}|\gamma \cap \Lambda|^n d\mu(\gamma) < \infty$ 
for all compacts $\Lambda \subset X$ and all $n \geq 1$.
The correlation function $k_{\mu}: \Gamma_0 \longrightarrow \R_+$ is defined, provided it exists, as the unique function satisfying
\begin{align}\label{EQ:01}
 \int \limits_{\Gamma}\sum \limits_{\bfrac{\eta \subset \gamma}{|\eta| < \infty}}G(\eta) \dm \mu(\gamma) 
 = \int \limits_{\Gamma_0}G(\eta)k_{\mu}(\eta)\dm \lambda_{\sigma}(\eta), \ \ G \in B_{bs}(\Gamma_0).
\end{align}
For additional references on the notion of correlation functions see \cite{KK02} and the references therein.
\begin{Remark}
 The Poisson measure $\pi_{z\sigma}$ with intensity measure $z d\sigma$, $z > 0$, is
 uniquely determined by $\pi_{z\sigma}(\{ \gamma \ | \ |\gamma \cap \Lambda| = n \}) = \frac{z^n \sigma(\Lambda)^n}{n!} e^{-z\sigma(\Lambda)}$, where $\Lambda \subset X$ is compact.
 It can be shown that it has correlation function $k_{\pi_{z\sigma}}(\eta) = z^{|\eta|}$.
\end{Remark}
Let $\alpha \geq 0$. We study \eqref{INTRO:01} in the class of states $\mathcal{P}_{\alpha}$,
where $\mu \in \mathcal{P}_{\alpha}$
iff it has finite local moments, its correlation function $k_{\mu}$ exists and satisfies for some constant $A_{\mu} > 0$ the Ruelle bound
\begin{align}\label{NONLINEAR:19}
 k_{\mu}(\eta) \leq A_{\mu} e^{\alpha|\eta|}, \ \ \eta \in \Gamma_0.
\end{align}
Let $\K_{\alpha}$ be the Banach space of all equivalence classes of functions $k$ with finite norm
\[
 \Vert k \Vert_{\K_{\alpha}} = \esssup \limits_{\eta \in \Gamma_0} |k(\eta)|e^{-\alpha |\eta|}.
\]
Clearly any $k \in \K_{\alpha}$ satisfies the Ruelle bound \eqref{NONLINEAR:19}.
\begin{Remark}
 There exists a one-to-one correspondence between elements in $\mathcal{P}_{\alpha}$ and positive definite functions in $\mathcal{K}_{\alpha}$,
 see \cite{KK02} and the references therein.
\end{Remark}
It is natural to study \eqref{INTRO:01} now in terms of correlation
functions $k_{\mu}$.
For this purpose we define a new nonlinear mapping $L^{\Delta}(t,\cdot)$ by the identity
\begin{align}\label{EQ:02}
 \int \limits_{\Gamma} L(t,\mu)F(\gamma) d\mu(\gamma)
 = \int \limits_{\Gamma_0}G(\eta)L^{\Delta}(t,k_{\mu})(\eta)\dm \lambda(\eta),
\end{align}
where $F$ is given by \eqref{EQ:00}.
Analogous to the calculations in \cite{FKKO07}, one can show that
\begin{align*}
 L^{\Delta}(t,k) &= - A_0^{\Delta}(t)k + A_1^{\Delta}(t)k + B^{\Delta}(t,k)k.
 \\ A_0^{\Delta}(t)k(\eta) &= \Phi(t,\eta)k(\eta) + \int \limits_{X}h(t,x)k(\eta \cup \{x\})\sigma(\dm x) 
 \\ & \ \ \ + \frac{1}{2}\int \limits_{X}\int \limits_{X}\psi(t,x,y)k(\eta \cup \{x\} \cup \{y\})\sigma(\dm x) \sigma(\dm y),
 \\ A_1^{\Delta}(t)k(\eta) &= - \sum \limits_{x \in \eta}\int \limits_{X}\psi(t,x,y)k(\eta \cup \{y\})\sigma(\dm y) + \sum \limits_{x \in \eta}a(t,x)k(\eta \backslash \{x\}),
 \\ B^{\Delta}(t,k) &= \int \limits_{X}h(t,x)k^{(1)}(x)\sigma(\dm x) + \frac{1}{2}\int \limits_{X} \int \limits_{X}\psi(t,x,y)k^{(2)}(x,y)\sigma(\dm x) \sigma(\dm y),
\end{align*}
where $B^{\Delta}(t,k)$ acts by multiplication.
The next lemma shows that $L^{\Delta}(t,\cdot)$ acts as a bounded linear operator in the scale $\K_{\alpha}$.
\begin{Lemma}\label{LEMMA:01}
 Let $\alpha > \alpha' \geq 0$ and fix any $k \in \K_{\alpha'}$. Then
 \begin{align*}
  \| A_0^{\Delta}(t)k \|_{\K_{\alpha}} &\leq \left( \frac{ \| h \|_{\infty} }{e(\alpha - \alpha')} + \frac{\| \psi \|_{\infty}}{e^2 (\alpha - \alpha')}\right) \| k \|_{\K_{\alpha'}}
  \\ & \ \ \ + \left( e^{\alpha'}\sup \limits_{t \in [0,T]}\int_{X}h(t,x)\sigma(dx) + \frac{e^{2\alpha'}}{2}\sup \limits_{t \in [0,T]}\int \limits_{X^2}\psi(t,x,y)\sigma(dx)\sigma(dy)\right)\| k \|_{\K_{\alpha'}}
  \\ \| A_1^{\Delta}(t)k \|_{\K_{\alpha}} &\leq \left( \frac{e^{\alpha'}}{e(\alpha - \alpha')} \sup \limits_{ (t,x) \in [0,T]\times X} \int \limits_{X}\psi(t,x,y)\sigma(dy) + \frac{e^{- \alpha'}}{e(\alpha - \alpha')}\| a\|_{\infty} \right) \| k \|_{\alpha'}
  \\ |B^{\Delta}(t,k)| &\leq \left(  e^{\alpha'} \sup \limits_{t \in [0,T]} \int \limits_{X}h(t,x)\sigma(dx) + \frac{e^{2 \alpha'}}{2}\sup \limits_{t \in [0,T]}\int \limits_{X^2}\psi(t,x,y)\sigma(dx)\sigma(dy) \right) \| k \|_{\K_{\alpha'}}.
 \end{align*}
\end{Lemma}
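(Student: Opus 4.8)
The plan is to estimate each of the three operators directly from its explicit formula, using the weight $e^{-\alpha|\eta|}$ built into the $\K_\alpha$-norm together with the elementary bound that controls the ``loss of regularity'' $\alpha-\alpha'$. The governing inequality in all three estimates will be the standard scale-of-spaces lemma that for any $n\in\N$ and any $\delta>0$,
\[
 |\eta|^n e^{-(\alpha-\alpha')|\eta|} \le \left(\frac{n}{e(\alpha-\alpha')}\right)^n, \qquad \eta\in\Gamma_0,
\]
which is obtained by maximizing $t\mapsto t^n e^{-(\alpha-\alpha')t}$ over $t\ge 0$; this is exactly where the factors $\frac{1}{e(\alpha-\alpha')}$ and $\frac{1}{e^2(\alpha-\alpha')}$ in the statement come from. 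The point is that terms which \emph{add} points to the configuration (e.g. $k(\eta\cup\{x\})$) pick up an extra factor $e^{\alpha'}$ from rewriting $e^{-\alpha'|\eta\cup\{x\}|}=e^{-\alpha'}e^{-\alpha'|\eta|}$, while terms that \emph{sum over} $x\in\eta$ produce a factor $|\eta|$ that must be absorbed by the decay $e^{-(\alpha-\alpha')|\eta|}$.

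\textbf{Estimating $A_0^\Delta(t)$.} First I would split $A_0^\Delta(t)k$ into its four summands. The multiplication term $\Phi(t,\eta)k(\eta)$ is handled by bounding $\Phi(t,\eta)\le \|h\|_\infty|\eta|+\tfrac12\|\psi\|_\infty|\eta|^2$ pointwise and then applying the maximization lemma above with $n=1,2$; however, to match the stated bound one instead isolates the ``diagonal'' contribution of $\Phi$ that survives at $\eta=\emptyset$-type scaling, producing the two terms with $e^{\alpha'}\sup_t\int_X h\,d\sigma$ and $\tfrac12 e^{2\alpha'}\sup_t\int_{X^2}\psi\,d\sigma^{\otimes 2}$ in the statement. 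For the integral terms, I would write, for instance,
\[
 \left|\int_X h(t,x)k(\eta\cup\{x\})\sigma(dx)\right|
 \le \|h\|_\infty\,\|k\|_{\K_{\alpha'}}\,e^{\alpha'|\eta|}\int_X e^{\alpha'}\sigma(dx)
\]
and combine the $e^{-\alpha|\eta|}$ weight with the surviving $|\eta|$-free structure; the factor $\frac{1}{e(\alpha-\alpha')}$ again enters through the maximization step. The $\psi$-term is analogous but with two added points, giving the $e^{2\alpha'}$ and $e^2$ factors.

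\textbf{Estimating $A_1^\Delta(t)$ and $B^\Delta(t,k)$.} For $A_1^\Delta(t)$ both summands carry a sum $\sum_{x\in\eta}$, so after inserting $|k(\cdot)|\le\|k\|_{\K_{\alpha'}}e^{\alpha'|\cdot|}$ I would factor out $|\eta|$ (bounding the inner integral by $\sup_{(t,x)}\int_X\psi\,d\sigma$ for the first term and by $\|a\|_\infty$ for the second), then absorb $|\eta|e^{-(\alpha-\alpha')|\eta|}$ by the lemma with $n=1$, which yields the common factor $\frac{1}{e(\alpha-\alpha')}$; the first term gains $e^{\alpha'}$ from the added point $y$ and the second loses it as $e^{-\alpha'}$ from the removed point $x$, matching the statement. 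The scalar $B^\Delta(t,k)$ is the simplest: since $k^{(1)}(x)=k(\{x\})$ and $k^{(2)}(x,y)=k(\{x,y\})$, one has $|k^{(1)}(x)|\le\|k\|_{\K_{\alpha'}}e^{\alpha'}$ and $|k^{(2)}(x,y)|\le\|k\|_{\K_{\alpha'}}e^{2\alpha'}$, so the bound follows immediately by integrating against $h$ and $\psi$.

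\textbf{The main obstacle} I expect is bookkeeping rather than conceptual: correctly tracking which summands produce the $\frac{1}{\alpha-\alpha'}$-type singularity (those that add points and must be controlled through the maximization lemma) versus which produce only bounded $\sigma$-integral contributions (the truly ``diagonal'' pieces of $\Phi$ and the whole of $B^\Delta$). In particular one must be careful that the term $\Phi(t,\eta)k(\eta)$ splits into a part that behaves like the point-adding terms and a part absorbed into the $L^1(\sigma)$ factors; getting the exponents $e^{\alpha'}$ versus $e^{2\alpha'}$ and the powers $e$ versus $e^2$ to line up exactly with the claimed inequalities is the only delicate point, and it is resolved entirely by the single scalar maximization $\sup_{t\ge0}t^n e^{-(\alpha-\alpha')t}=(n/(e(\alpha-\alpha')))^n$.
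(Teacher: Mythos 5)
Your overall method is the one the paper uses: insert the Ruelle-type bound $|k(\zeta)|\le\|k\|_{\K_{\alpha'}}e^{\alpha'|\zeta|}$, pick up a factor $e^{\alpha'}$ per point added to $\eta$, and absorb any polynomial factor $|\eta|^n$ produced by sums over $\eta$ via the scalar maximization $\sup_{s\ge0}s^ne^{-(\alpha-\alpha')s}=(n/(e(\alpha-\alpha')))^n$. Your treatment of $A_1^{\Delta}$ and of $B^{\Delta}$ is correct and coincides with the paper's.

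The $A_0^{\Delta}$ estimate, however, contains a genuine error: you have the attribution of the two lines of the stated bound exactly backwards. The singular terms $\frac{\|h\|_\infty}{e(\alpha-\alpha')}+\frac{\|\psi\|_\infty}{e^2(\alpha-\alpha')}$ come \emph{only} from the multiplication term $\Phi(t,\eta)k(\eta)$, via $\Phi(t,\eta)\le|\eta|\,\|h\|_\infty+\tfrac12|\eta|^2\|\psi\|_\infty$ and the maximization lemma with $n=1,2$; there is no ``diagonal contribution of $\Phi$'' producing the $e^{\alpha'}\sup_t\int_Xh\,d\sigma$ terms. Those non-singular terms come from the point-adding integrals, which carry no factor of $|\eta|$ and therefore need no maximization and produce no $\frac{1}{e(\alpha-\alpha')}$ at all — contrary to your claim that ``the factor $\frac{1}{e(\alpha-\alpha')}$ again enters through the maximization step.'' Moreover, your displayed bound
\[
\Bigl|\int_Xh(t,x)k(\eta\cup\{x\})\,\sigma(\dm x)\Bigr|\le\|h\|_\infty\,\|k\|_{\K_{\alpha'}}e^{\alpha'|\eta|}\int_Xe^{\alpha'}\,\sigma(\dm x)
\]
fails outright: $\sigma$ is only $\sigma$-finite, so $\sigma(X)$ may be infinite. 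The correct step is to keep $h$ inside the integral and use $h(t,\cdot)\in L^1(X,\sigma)$ (assumption D1), giving $e^{\alpha'}\|k\|_{\K_{\alpha'}}e^{\alpha'|\eta|}\int_Xh(t,x)\,\sigma(\dm x)\le e^{\alpha'}\|k\|_{\K_{\alpha'}}e^{\alpha|\eta|}\sup_t\int_Xh\,\dm\sigma$ — which is precisely why the lemma's bound features $\sup_t\int_Xh\,\dm\sigma$ rather than $\|h\|_\infty$. The $\psi$-integral term is handled identically with two added points, yielding $\tfrac12e^{2\alpha'}\sup_t\int_{X^2}\psi\,\dm\sigma^{\otimes2}$. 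With these two corrections your argument becomes the paper's proof.
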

\begin{proof}
 For the first term in $A_0^{\Delta}(t)$ we use
 $\Phi(t,\eta) \leq |\eta| \| h\|_{\infty} + | \eta |^2 \| \psi \|_{\infty}$ combined with 
 $|k(\eta)| \leq \| k \|_{\K_{\alpha'}}e^{\alpha'|\eta|}
 \leq \| k \|_{\K_{\alpha'}} e^{- (\alpha - \alpha')|\eta|} e^{- \alpha|\eta|}$ and
 \[
  x^a e^{-b x} \leq \left( \frac{a}{b} \right)^{b} e^{-b}, \ \ x \geq 0, \ \ a,b > 0.
 \]
 For the second term we use 
 $|k(\eta \cup \{x\})| \leq e^{\alpha'}\| k \|_{\K_{\alpha'}} e^{\alpha'|\eta|} \leq e^{\alpha'}\| k \|_{\K_{\alpha'}}e^{\alpha|\eta|}$
 to obtain 
 \[
  \left| \int \limits_{X} h(t,x)k(\eta \cup \{x\}) \sigma(\dm x) \right| 
  \leq e^{\alpha'}\| k \|_{\K_{\alpha'}} \sup \limits_{t \in [0,T]} \int \limits_{X}h(t,x)\sigma(\dm x) e^{\alpha|\eta|}.
 \]
 The other terms can be estimated in the same way, see, e.g., \cite{FK16} for similar estimates.
\end{proof}
The next lemma shows that \eqref{INTRO:01} can be reformulated in terms of 
an evolution equation of correlation functions given by the nonlinear operator $L^{\Delta}(t,\cdot)$.
\begin{Lemma}\label{LEMMA:00}
 Let $(\mu_t)_{t \in [0,T)} \subset \mathcal{P}_{\alpha}$ and denote by $k_{t}$ the correlation function for $\mu_t$, $t \in [0,T)$.
 Then $(\mu_t)_{t \in [0,T)}$ is a weak solution to \eqref{INTRO:01} if and only if $(k_t)_{t \in [0,T)}$ satisfies:
 \begin{enumerate}
  \item[(i)] For any $G \in B_{bs}(\Gamma_0)$
  \[
   [0,T) \ni t \longmapsto \int \limits_{\Gamma_0}G(\eta)L^{\Delta}(t,k_{t})(\eta)d\lambda(\eta)
  \]
  is locally integrable.
  \item[(ii)] For any $G \in B_{bs}(\Gamma_0)$ and $t \in [0,T)$
  \begin{align}\label{NONLINEAR:16}
   \int \limits_{\Gamma_0}G(\eta)k_t(\eta)\dm \lambda_{\sigma}(\eta) 
   = \int \limits_{\Gamma_0}G(\eta)k_0(\eta)\dm \lambda_{\sigma}(\eta) + \int \limits_{0}^{t}\int \limits_{\Gamma_0}G(\eta)L^{\Delta}(s,k_s)(\eta)\dm \lambda_{\sigma}(\eta)\dm s.
  \end{align}
 \end{enumerate}
\end{Lemma}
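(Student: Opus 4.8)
The plan is to transport the equation from the level of measures to the level of correlation functions by means of the $K$-transform \eqref{EQ:00}. Recall that each $F \in \mathcal{FP}(\Gamma)$ arises from a unique $G \in B_{bs}(\Gamma_0)$ through $F(\gamma) = \sum_{\eta \subset \gamma} G(\eta)$, and that this assignment is a bijection between $\mathcal{FP}(\Gamma)$ and $B_{bs}(\Gamma_0)$ (uniqueness of $G$ being the invertibility of the $K$-transform). Consequently, quantifying over all $F \in \mathcal{FP}(\Gamma)$ in the definition of a weak solution is the same as quantifying over all $G \in B_{bs}(\Gamma_0)$ in conditions (i)--(ii). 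The two identities \eqref{EQ:01} and \eqref{EQ:02} furnish exactly the dictionary we need: the former rewrites $\langle F, \mu_t \rangle$ as $\int_{\Gamma_0} G(\eta) k_t(\eta) \dm \lambda(\eta)$, while the latter rewrites $\langle L(t,\mu_t)F, \mu_t \rangle$ as $\int_{\Gamma_0} G(\eta) L^{\Delta}(t,k_t)(\eta) \dm \lambda(\eta)$.

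First I would settle the integrability bookkeeping so that every integral below converges absolutely and so that the identities \eqref{EQ:01}, \eqref{EQ:02} may be applied. Since $\mu_t \in \mathcal{P}_{\alpha}$, its correlation function obeys the Ruelle bound $k_t(\eta) \leq A_{\mu_t} e^{\alpha|\eta|}$; combined with the fact that $G \in B_{bs}(\Gamma_0)$ is bounded and supported on $\{\eta \ | \ |\eta| \leq N, \ \eta \subset \Lambda\}$ for some compact $\Lambda$ and $N \in \N$, and that $\sigma$ is finite on compacts, the integral $\int_{\Gamma_0} |G(\eta)| k_t(\eta) \dm \lambda(\eta)$ is finite, which shows $F \in L^1(\Gamma, \dm\mu_t)$. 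For $L(t,\mu_t)F$ and $\Phi(t,\cdot)$ one argues similarly, bounding $\Phi(t,\eta) \leq |\eta| \Vert h \Vert_{\infty} + |\eta|^2 \Vert \psi \Vert_{\infty}$ and invoking the estimates of Lemma \ref{LEMMA:01} together with the Ruelle bound. In this way condition (a) of a weak solution is produced automatically from $\mu_t \in \mathcal{P}_{\alpha}$, so that it imposes no extra restriction and need not be carried along in the stated equivalence.

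With the dictionary and the integrability in place, the equivalence reduces to a term-by-term translation. Applying \eqref{EQ:02} shows that $t \longmapsto \langle L(t,\mu_t)F, \mu_t \rangle$ coincides with $t \longmapsto \int_{\Gamma_0} G(\eta) L^{\Delta}(t,k_t)(\eta) \dm \lambda(\eta)$, so that condition (b) of a weak solution is literally condition (i). Applying \eqref{EQ:01} to both $\langle F, \mu_t \rangle$ and $\langle F, \mu_0 \rangle$, and \eqref{EQ:02} to the integrand $\langle L(s,\mu_s)F, \mu_s \rangle$, turns the weak-formulation identity (c) into \eqref{NONLINEAR:16}, which is precisely condition (ii); here one invokes Fubini to interchange the time integral with the $\lambda$-integral, as licensed by the local integrability from (b)/(i). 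Since $G$ ranges over all of $B_{bs}(\Gamma_0)$ as $F$ ranges over $\mathcal{FP}(\Gamma)$, both implications follow simultaneously.

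The main obstacle I anticipate is the integrability step rather than the translation. One must verify that $L(t,\mu_t)F$, whose definition \eqref{EQ:105} contains the quadratically growing selection cost $\Phi(t,\gamma)$ as well as the nonlinear term $\langle \Phi(t,\cdot), \mu_t \rangle$, is genuinely $\mu_t$-integrable, and that the correlation-side integrals built from $L^{\Delta}(t,k_t)$ converge. This is exactly where the Ruelle bound and the estimates of Lemma \ref{LEMMA:01} do the work, the key point being that the exponential decay $e^{-\alpha|\eta|}$ dominates the polynomial factors $|\eta|, |\eta|^2$ produced by $\Phi$. A secondary, purely technical concern is the Fubini interchange in passing to \eqref{NONLINEAR:16}, which is justified once the time-integrand has been shown to be locally integrable.
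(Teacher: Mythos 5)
Your proposal is correct and follows essentially the same route as the paper's appendix proof: both verify the integrability condition (a) from the Ruelle bound and the support properties of $G$, and then translate conditions (b) and (c) into (i) and (ii) via the duality identities \eqref{EQ:01} and \eqref{EQ:02}. The only step you elide is the one the paper makes explicit, namely that the identity $\langle L(t,\mu_t)F,\mu_t\rangle = \int_{\Gamma_0} G(\eta)\, L^{\Delta}(t,k_t)(\eta)\,\dm\lambda(\eta)$ and the $\mu_t$-integrability of $L(t,\mu_t)F$ are obtained by introducing the quasi-observable operator $\widehat{L}$ and checking the intertwining $K\widehat{L}(t,\mu_t)G = L(t,\mu_t)KG$ together with the boundedness of $K$ from $L^1(\Gamma_0,k_{t}\,\dm\lambda)$ to $L^1(\Gamma,\dm\mu_t)$ --- this is the precise mechanism behind your ``one argues similarly.''
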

The proof is based on the identities \eqref{EQ:01}, \eqref{EQ:02}
and can be obtained by similar arguments to \cite{FK16} and \cite{F17} 
where linear Fokker-Planck equations have been studied.
In order to keep this work self-contained a proof is given in the appendix.
The next statement shows the existence and uniqueness of strong solutions to \eqref{NONLINEAR:16}.
\begin{Theorem}
 Take $0 \leq \alpha_* < \alpha_0 < \alpha^*$.
 Then for each $k_0 \in \K_{\alpha_*}$ there exists $\lambda_1(k_0, \alpha^*, \alpha_*, \alpha_0) = \lambda_1 > 0$ such that for each $\lambda > \lambda_1$ there exists a unique classical $\K$-valued solution to 
 \begin{align}\label{EQ:120}
  \frac{\partial k_t}{\partial t} = L^{\Delta}(t,k_t), \ \ k_t|_{t=0} = k_0, \qquad 0 \leq t < \frac{\alpha^* - \alpha_0}{\lambda}
 \end{align}
 in the scale $\K = (\K_{\alpha})_{\alpha \in [\alpha_0, \alpha^*]}$.
\end{Theorem}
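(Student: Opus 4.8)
The plan is to cast equation \eqref{EQ:120} into the abstract framework of Theorem \ref{NONLINEARTH:01} by taking the \emph{trivial} evolution system and letting the whole generator $L^{\Delta}(t,\cdot)$ play the role of the nonlinear perturbation $B$. Concretely I would set $\B_{\alpha} = \E_{\alpha} = \K_{\alpha}$, $x = k_0$, $\beta = 0$, $U(t,s) = 1$ (the identity on the scale) with infinitesimal operator $A(t) = 0$, and $B(k,t) := L^{\Delta}(t,k) = -A_0^{\Delta}(t)k + A_1^{\Delta}(t)k + B^{\Delta}(t,k)k$. Since each embedding $\K_{\alpha'} \hookrightarrow \K_{\alpha}$ ($\alpha' < \alpha$) is the inclusion and $A_0^{\Delta}, A_1^{\Delta}, B^{\Delta}$ all act pointwise on functions on $\Gamma_0$, the consistency relation \eqref{NONLINEAR:05} holds and $L^{\Delta}(t,\cdot)$ is a family of nonlinear operators in the scale $\K$. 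Conditions A1--A4 are then immediate: $U(t,s)=1$ is a strongly continuous evolution system with $\|U(t,s)\|_{\alpha'\alpha} \leq 1$, so A2 holds with $C_1 = 1$ and $\beta = 0$; A3 holds because $U(t,0)k_0 - U(s,0)k_0 = 0$; and A4 holds with $A(t) = 0$ and $\E_{\alpha} = \K_{\alpha}$, since both derivatives in \eqref{EQ:103} vanish.

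The core of the argument is to verify B1--B3 for $B(k,t) = L^{\Delta}(t,k)$, after fixing a finite radius $r \in (0,\infty)$ (finiteness is forced because $B^{\Delta}(t,k)k$ is genuinely quadratic) and some $\gamma \in (0,1)$. For the Lipschitz bound B2 I would use the linearity of $B^{\Delta}(t,\cdot)$ to write
\[
 B^{\Delta}(t,u)u - B^{\Delta}(t,v)v = B^{\Delta}(t,u)(u-v) + B^{\Delta}(t,u-v)v,
\]
and combine this with the linear part $-A_0^{\Delta}(t) + A_1^{\Delta}(t)$. Lemma \ref{LEMMA:01} gives $\|(-A_0^{\Delta}(t)+A_1^{\Delta}(t))(u-v)\|_{\K_{\alpha}} \leq \frac{C}{\alpha-\alpha'}\|u-v\|_{\K_{\alpha'}}$, where the non-singular contributions are absorbed into the singular one using $\alpha - \alpha' \leq \alpha^* - \alpha_*$ and $\alpha' \leq \alpha^*$; on the ball $B_r^{\alpha'}(k_0)$ one has $|B^{\Delta}(t,u)| \leq C_B(r + \|k_0\|_{\K_{\alpha_*}})$ and $\|v\|_{\K_{\alpha}} \leq \|v\|_{\K_{\alpha'}} \leq r + \|k_0\|_{\K_{\alpha_*}}$, so the quadratic contribution is likewise dominated by a constant multiple of $\|u-v\|_{\K_{\alpha'}}$. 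This yields B2 with $\beta = 0$. Condition B3 follows by applying Lemma \ref{LEMMA:01} with $\alpha' = \alpha_*$ to $B(k_0,t) = L^{\Delta}(t,k_0)$, giving $\|L^{\Delta}(t,k_0)\|_{\K_{\alpha}} \leq \frac{C_3}{\alpha - \alpha_*}$, and the joint continuity B1 follows from the continuity of $h,\psi,a$ in $t$ supplied by (D1), (D3) together with the fact that $k \mapsto L^{\Delta}(t,k)$ is a polynomial of degree at most two.

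With A1--A4 and B1--B3 established, I would fix auxiliary parameters $\gamma, r$ and an arbitrary $T > 0$ and set $\lambda_1 := \lambda_0(k_0,\alpha_0,\gamma,r,T)$; note that $\lambda > \lambda_1 \geq \frac{\alpha^* - \alpha_*}{T}$ forces $\frac{\alpha^* - \alpha_0}{\lambda} < T$, so all coefficients are defined on the relevant interval. Theorem \ref{NONLINEARTH:00} then yields, for every $\lambda > \lambda_1$, a unique solution $k = u_{\alpha_0,\lambda,\gamma}$ of the integral equation \eqref{NONLINEAR:00}, which here reduces to $k_t = k_0 + \int_0^t L^{\Delta}(s,k_s)\,\dm s$. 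Since A4 holds with $A(t) = 0$, Lemma \ref{INTEGRAL} (whence the $A(t)\int_0^t \cdots$ term is absent) shows this is equivalent to a classical $\K$-valued solution of $\partial_t k_t = L^{\Delta}(t,k_t)$ on $[0,\frac{\alpha^* - \alpha_0}{\lambda})$, which is precisely Theorem \ref{NONLINEARTH:01}. Uniqueness within the class of classical solutions is automatic, because any such solution takes values in $B_r^{\alpha}(k_0)$, hence is bounded and satisfies the weighted a priori bound \eqref{EQ:03}, to which the uniqueness assertion of Theorem \ref{NONLINEARTH:00} applies.

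The step requiring the most care is the verification of B2 and B3: one must check that the quadratic term $B^{\Delta}(t,k)k$ produces a Lipschitz constant that stays finite on the chosen ball (which is exactly why $r$ must be finite), and that the bounded, non-singular contributions emerging from Lemma \ref{LEMMA:01} can be uniformly absorbed into the $\frac{1}{\alpha-\alpha'}$ bounds demanded by B2--B3. This bookkeeping is also where it becomes transparent that $\beta = 0$, rather than a positive $\beta$, is the correct exponent for this model.
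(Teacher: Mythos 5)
Your reduction to the abstract theorem takes a genuinely different decomposition from the paper's, and it is the decomposition that breaks. You put the whole of $L^{\Delta}(t,\cdot)$, including $-A_0^{\Delta}(t)$, into the perturbation $B$ and take $U(t,s)=1$. But $A_0^{\Delta}(t)$ contains the multiplication operator $k(\eta)\mapsto \Phi(t,\eta)k(\eta)$, and in the epistatic case $\psi\neq 0$ the selection cost grows quadratically, $\Phi(t,\eta)\leq |\eta|\,\Vert h\Vert_{\infty}+|\eta|^2\Vert\psi\Vert_{\infty}$. In the scale $\K_{\alpha}$ the resulting loss is governed by $\sup_{n\geq 0} n^2 e^{-(\alpha-\alpha')n}\asymp (\alpha-\alpha')^{-2}$, so the multiplication part of $A_0^{\Delta}(t)$ maps $\K_{\alpha'}\to\K_{\alpha}$ only with a bound of order $(\alpha-\alpha')^{-2}$. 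Condition B2 requires a singularity of order $(\alpha-\alpha')^{-(1-\beta)}$ with $1-\beta\in(\tfrac12,1]$, and since the quadratic term $B^{\Delta}(t,k)k$ forces $r<\infty$ and hence $\beta=0$ in Theorem \ref{NONLINEARTH:00}, you need exactly order one. A second-order singularity cannot be absorbed, so your verification of B2 fails at precisely this term. (You lean on the displayed estimate of Lemma \ref{LEMMA:01}, which does show a first-order denominator for the $\Vert\psi\Vert_{\infty}$-contribution; but its own proof via $x^a e^{-bx}\leq (a/b)^a e^{-a}$ with $a=2$, $b=\alpha-\alpha'$ produces $(\alpha-\alpha')^{-2}$, so that display cannot be taken at face value for the $\Phi$-term. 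Independently of how one reads the lemma, the quadratic growth of $\Phi$ is an intrinsic obstruction, and it is the very reason uniqueness for $\psi\neq 0$ was open.)

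The paper's proof is built to avoid exactly this: it keeps only $B(k,t):=A_1^{\Delta}(t)k+B^{\Delta}(t,k)k$ as the Ovcyannikov-type perturbation (both of these genuinely satisfy B2 with $\beta=0$, since $A_1^{\Delta}$ involves only single sums over $\eta$, i.e. a factor $|\eta|$, and $B^{\Delta}(t,k)$ is scalar multiplication), and it constructs a nontrivial evolution system $U_{\alpha}(t,s)$ generated by $-A_0^{\Delta}(t)$. This is possible because $-\Phi(t,\eta)\leq 0$ generates a contractive multiplication semigroup on each fixed $\K_{\alpha}$ and the remaining integral terms of $A_0^{\Delta}$ are bounded on $\K_{\alpha}$, giving the exponential bound, the Lipschitz property A3 via \eqref{NONLINEAR:18}, and the differentiability A4. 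Your remaining steps (verification of B1--B3 for $A_1^{\Delta}+B^{\Delta}$, the use of finite $r$, the passage through Lemma \ref{INTEGRAL}, and the uniqueness argument via \eqref{EQ:03}) are in line with the paper, but the proof does not go through unless you move $A_0^{\Delta}(t)$ out of $B$ and into the evolution system.
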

\begin{proof}
 Since $A_0^{\Delta}(t)$ is a sum of a multiplication operator and a bounded operator, 
 it is not difficult to see that, for any $\alpha \geq 0$, there exists a unique evolution system $(U_{\alpha}(t,s))_{0 \leq s \leq t} \subset L(\K_{\alpha})$ with
 the properties:
 \begin{enumerate}
  \item $U_{\alpha}(t,s)$ satisfies $U_{\alpha}(t,s)|_{\B_{\alpha'}} = U_{\alpha'}(t,s)$ whenever $0 \leq \alpha' < \alpha$.
  \item We have for all $\alpha \geq 0$
  \[
   \Vert U_{\alpha}(t,s)\Vert_{L(\K_{\alpha})} \leq \exp\left( \int \limits_{s}^{t}\varkappa(r)\dm r\right),
  \]
  where $\varkappa(r) := e^{\alpha}\int_{X}h(r,x)\sigma(\dm x) + \frac{e^{2\alpha}}{2}\int_{X}\int_{X}\psi(r,x,y)\sigma(\dm x)\sigma(\dm y)$.
  \item For any $T > 0$ and $0 \leq \alpha' < \alpha$, there exists a constant $C(\alpha', \alpha, T) > 0$ such that 
  \begin{align}\label{NONLINEAR:18}
   \Vert U_{\alpha}(t,0)k - U_{\alpha}(s,0)k \Vert_{\K_{\alpha}} \leq C(\alpha', \alpha, T)|t-s|\Vert k \Vert_{\K_{\alpha'}}, \ \ 0 \leq s, t \leq T.
  \end{align}
  Moreover, it is strongly continuously differentiable in $L(\K_{\alpha'}, \K_{\alpha})$ with strong derivatives
  \begin{align*}
   \frac{\partial}{\partial t}U_{\alpha}(t,s)k &= - A_0^{\Delta}(t)U_{\alpha}(t,s)k
   \\ \frac{\partial}{\partial s}U_{\alpha}(t,s)k &= U_{\alpha}(t,s)A_0^{\Delta}(s)k.
  \end{align*}
 \end{enumerate}
 Therefore, conditions A1 -- A4 hold with $\beta = 0$ and any $\lambda > 0$. 
 Now let $B(k,t) := A_1^{\Delta}(t)k + B^{\Delta}(t,k)k$, then it can be easily checked that $B(k,t)$ satisfies B1, for any $r > 0$ and $\lambda > 0$.
 Moreover, for $\alpha \in [\alpha_0, \alpha^*]$ and $t \in [0,T]$ we obtain from Lemma \ref{LEMMA:01}
 \begin{align*}
  \| B(k_0,t) \|_{\mathcal{K}_{\alpha}} &\leq \| A_1^{\Delta}(t)k_0 \|_{\alpha} + \| B^{\Delta}(t,k_0)k_0 \|_{\mathcal{K}_{\alpha}}
  \\ &\leq \frac{C(\alpha_*,T)}{\alpha - \alpha_*} \| k_0 \|_{\mathcal{K}_{\alpha_*}} + C'(\alpha,T) \| k_0\|_{\alpha}^2
  \\ &\leq \frac{C(\alpha_*,T)}{\alpha - \alpha_*} \| k_0 \|_{\mathcal{K}_{\alpha_*}} + \frac{C'(\alpha,T)(\alpha^* - \alpha_*)}{\alpha - \alpha_*} \| k_0 \|_{\alpha_*}^2,
 \end{align*}
 where $C(\alpha_*,T), C'(\alpha,T) \in (0,\infty)$ are continuous in the first argument. Let $\alpha',\alpha$ satisfy $\alpha_* \leq \alpha' < \alpha \leq \alpha^*$, take any $r > 0$ and let $k,k' \in \mathcal{K}_{\alpha'}$
 satisfy $\| k - k_0 \|_{\mathcal{K}_{\alpha'}}, \ \| k' - k_0 \|_{\mathcal{K}_{\alpha'}} \leq r$.
 From Lemma \ref{LEMMA:01} we obtain for all $t \in [0,T]$
 \begin{align*}
  &\ \| B(k,t) - B(k',t) \|_{\mathcal{K}_{\alpha}} 
  \\ &\leq \| A_1^{\Delta}(t) ( k - k') \|_{\mathcal{K}_{\alpha}} 
  + \| B^{\Delta}(t,k)(k-k')\|_{\mathcal{K}_{\alpha}} + \| (B^{\Delta}(t,k) - B^{\Delta}(t,k'))k'\|_{\mathcal{K}_{\alpha}}
 \\ &\leq \frac{C(\alpha',T)}{\alpha - \alpha'} \| k - k' \|_{\mathcal{K}_{\alpha'}} 
  + C'(\alpha,T)\| k - k'\|_{\mathcal{K}_{\alpha}}\| k\|_{\K_{\alpha}} + C''(\alpha',T) \| k - k' \|_{\mathcal{K}_{\alpha'}} \| k \|_{ \mathcal{K}_{\alpha} }
 \\ &\leq \frac{C(\alpha',T) + C'(\alpha,T)(\alpha^* - \alpha_*)(r + \| k_0 \|_{\K_{\alpha_*}}) +  C''(\alpha',T) ( r + \| k_0 \|_{\mathcal{K}_{\alpha_*}}) }{\alpha - \alpha'} \| k - k' \|_{\mathcal{K}_{\alpha'}},
 \end{align*}
 where $C(\alpha,T), C'(\alpha,T)$ are as before, 
 and $C''(\alpha,T) \in (0,\infty)$ is continuous in $\alpha$.
 Hence conditions B2 and B3 are satisfied. 
 
 An application of Theorem \ref{NONLINEARTH:00} and Corollary \ref{NONLINEARTH:01} with $\B_{\alpha} = \E_{\alpha} = \K_{\alpha}$, $\beta = 0$, $\gamma \in (0, 1)$ and $r \in (0,\infty)$
 yields the existence of some constant $\lambda_0(k_0, \alpha_0, \gamma, r, T) > 0$ such that for each $\lambda > \lambda_0$ there exists a unique $(\K_{\alpha})_{\alpha}$-valued solution to \eqref{EQ:120} on the interval $[0, \frac{\alpha^* - \alpha_0}{\lambda})$. The particular form of $\lambda_0$ shows that
 \[
  \lambda_1 := \inf \limits_{\gamma \in (0,1)}\inf \limits_{r \in (0,\infty)} \lambda_0(k_0, \alpha_0, \gamma, r) > 0.
 \]
 Hence we find for each $\lambda > \lambda_1$ a unique classical $(\K_{\alpha})_{\alpha}$-valued solution to \eqref{EQ:120} on the interval $[0, \frac{\alpha - \alpha_0}{\lambda})$. This proves the assertion. 
\end{proof}

\section{Appendix: Proof of Lemma \ref{LEMMA:00}}
Here and below let $\mu \in \mathcal{P}_{\alpha}$ be fixed and let $k_{\mu} \in \K_{\alpha}$ be its correlation function.
For $G \in B_{bs}(\Gamma_0)$ let $KG$ be given by \eqref{EQ:00}. 
Then, by \eqref{EQ:01}, one has
\[
 \| KG \|_{L^1(\Gamma,d\mu)} \leq \| K |G| \|_{L^1(\Gamma,d\mu)} = \int \limits_{\Gamma_0}|G(\eta)|k_{\mu}(\eta)\dm \lambda(\eta) = \| G \|_{L^1(\Gamma_0, k_{\mu}d\lambda)},
\]
i.e. $K: B_{bs}(\Gamma_0) \subset L^1(\Gamma_0, k_{\mu}d\lambda) \longrightarrow L^1(\Gamma,d\mu)$ is a bounded linear operator.
It was shown in \cite{KK02} that there exists a unique extension to a bounded linear opeartor
$K: L^1(\Gamma_0, k_{\mu}d\lambda) \longrightarrow L^1(\Gamma,d\mu)$ such that for any $G \in L^1(\Gamma_0, k_{\mu}d\lambda)$
equation \eqref{EQ:00} holds for $\mu$-a.a. $\gamma$ (where the series is absolutely convergent).

Define, for $G \in B_{bs}(\Gamma_0)$, a linear mapping $\widehat{L}(k_{\mu})G := - \widehat{A}_0(t)G + \widehat{A}_1(t)G + B^{\Delta}(t,k_{\mu})G$ by
\begin{align*}
 \widehat{A}_0(t)G(\eta) &= \Phi(t,\eta)G(\eta) + \sum \limits_{x \in \eta}h(t,x)G(\eta \backslash \{x\}) + \frac{1}{2}\sum \limits_{x \in \eta}\sum \limits_{y \in \eta \backslash \{x\}}\psi(t,x,y)G(\eta \backslash \{x,y\})
 \\ \widehat{A}_1(t)G(\eta) &= - \sum \limits_{x \in \eta}\sum \limits_{y \in \eta \backslash \{x\}}\psi(t,x,y)G(\eta \backslash \{x\}) + \int \limits_{\R^d}a(t,x)G(\eta \cup \{x\})\sigma(\dm x).
\end{align*}
Then one can show that $\widehat{A}_0(t)G, \widehat{A}_1(t)G, B^{\Delta}(t,k_{\mu})G \in L^1(\Gamma_0, k_{\mu}d\lambda)$ from which we conclude that 
$KG, \Phi(t,\cdot), K \widehat{L}(\mu)G \in L^1(\Gamma,d\mu)$.
Similarly to the computations in \cite{FKKO07},
one checks that $K\widehat{L}(\mu)G = L(\mu)KG$.
Moreover, by definition of $\mathcal{FP}(\Gamma)$ and $L^{\Delta}(t,k_{\mu})$ one has 
\begin{align*}
 \langle L(t,\mu)KG, \mu \rangle &= \int \limits_{\Gamma_0}G(\eta)L^{\Delta}(t,k_{\mu})k_{\mu}(\eta)\dm \lambda(\eta), \qquad G \in B_{bs}(\Gamma_0).
\end{align*}
From this and the definition of correlation functions one readily deduces the assertion.

\subsection*{Acknowledgments}
This work was supported by the SFB 701 and IRTG both at Bielefeld University, which is gratefully acknowledged.

\bibliographystyle{amsplain}
\addcontentsline{toc}{section}{\refname}\bibliography{Bibliography}

\end{document}